\documentclass[journal,twoside,web]{ieeecolor}
\usepackage{generic}
\usepackage{cite}
\usepackage{amsmath,amssymb,amsfonts}
\usepackage{algorithmic}
\usepackage{graphicx}
\usepackage{algorithm,algorithmic}

\usepackage{array}
\usepackage[caption=false,font=normalsize,labelfont=sf,textfont=sf]{subfig}
\usepackage{textcomp}
\usepackage{stfloats}
\usepackage{url}
\usepackage{verbatim}
\usepackage{bm}
\hyphenation{op-tical net-works semi-conduc-tor IEEE-Xplore}
\usepackage{multicol}

\usepackage{color}
\usepackage{ulem}
\usepackage{multicol}
\usepackage{multirow}
\usepackage{booktabs}
\newtheorem{theorem}{Theorem}
\newtheorem{lemma}{Lemma}

\newtheorem{remark}{Remark}
\newtheorem{definition}{Definition}
\newtheorem{assumption}{Assumption}

\usepackage{threeparttable}
\usepackage{hyperref}
\hypersetup{hidelinks=true}


\begin{document}
\title{Cryptography-Based Privacy-Preserving Method for Distributed Optimization over Time-Varying Directed Graphs with Enhanced Efficiency}

\author{Bing Liu,
        Furan Xie
        and Li Chai*
\thanks{This work is supported by National Natural Science Foundation of China under grant number 62173259.}
\thanks{Bing Liu and Furan Xie are with the Engineering Research Center of Metallurgical Automation and Measurement Technology, Wuhan University of Science and Technology, Wuhan 430081, China (e-mail: liubing17@wust.edu.cn; xiefuran328@wust.edu.cn).}
\thanks{Li Chai is with the College of Control Science and Engineering, Zhejiang University, Hangzhou 310027, China (e-mail: chaili@zju.edu.cn).}
\thanks{*Corresponding Author.}}
\maketitle
\begin{abstract}
In this paper, we study the privacy-preserving distributed optimization problem, aiming to prevent attackers from stealing the private information of agents. For this purpose, we propose a novel privacy-preserving algorithm based on the Advanced Encryption Standard (AES), which is both secure and computationally efficient. By appropriately constructing the underlying weight matrices, our algorithm can be applied to time-varying directed networks. We show that the proposed algorithm can protect an agent’s privacy if the agent has at least one legitimate neighbor at the initial iteration. Under the assumption that the objective function is strongly convex and Lipschitz smooth, we rigorously prove that the proposed algorithm has a linear convergence rate. Finally, the effectiveness of the proposed algorithm is demonstrated by numerical simulations of the canonical sensor fusion problem.
\end{abstract}
\begin{IEEEkeywords}
AES, cryptography, distributed optimization, directed graph, privacy preservation.
\end{IEEEkeywords}

\section{Introduction}
\IEEEPARstart{I}{n} recent years, distributed optimization has attracted extensive attention due to its wide application in many emerging and popular fields including multi-agent systems, machine learning, sensor networks, and power systems, etc \cite{bazerque2010distributed, tsianos2012consensusbased, tao2019asurvey, yi2020average, yi2023convergence}. In these applications, each agent has access to a local objective function and all agents cooperatively solve a global objective function that is the sum of their individual objective functions through local computation and communication. The distributed optimization problem can be generally formulated as follows:
\begin{equation}\label{prob_orig_0}
\mathop{\min}_{x\in\mathbb{R}^d} f(x) = \frac{1}{m}\sum_{i=1}^{m}f_i(x),
\end{equation}
where $f_i: \mathbb{R}^d \to \mathbb{R}$ is the local objective function of agent $i$, $x \in \mathbb{R}^d$ is the global decision variable, and $m$ is the number of agents.

To solve the problem (\ref{prob_orig_0}), plenty of first-order distributed optimization algorithms have been reported over different underlying communication networks/graphs. References \cite{duchi2012dual, jakovetic2014fast, shi2014on, shi2015extra, olshevsky2017linear, qu2018harnessing, liu2022convergence} focus on static undirected graphs and \cite{nedic2009distributed, xu2018convergence, nedic2017achieving} address the time-varying/stochastic undirected graphs. Since in many scenarios the agent's communications are directed, a series of algorithms have been developed over directed graphs \cite{zeng2017extrapush, xi2017dextra, xi2018linear, xin2018a, xi2018addopt, xin2018frost, pu2021pushpull}, and even over time-varying directed graphs \cite{nedic2015distributed, nedic2017achieving, saadatniaki2020decentralized, nedic2022abpushpull}. On the other hand, for the algorithms over undirected graphs, their underlying weight matrices are usually double stochastic, but it is unrealistic to construct double stochastic weight matrices over directed graphs. To overcome this difficulty, \cite{nedic2015distributed, nedic2017achieving, zeng2017extrapush, xi2017dextra, xi2018addopt} develop algorithms based on the push-sum protocol using only column stochastic weight matrices. These algorithms require each agent to know its out-degree to construct column stochastic weight matrices. In contrast, the algorithms in \cite{xi2018linear, xin2018frost} only use row stochastic weight matrices and require no such knowledge. Using only column or row stochastic weight matrices will cause an imbalance. These algorithms need to additionally estimate the right or left eigenvectors with eigenvalue 1 of weight matrices to alleviate the imbalance. The algorithms in \cite{xin2018a, saadatniaki2020decentralized, pu2021pushpull, nedic2022abpushpull} avoid this extra estimation by using a row stochastic weight matrix as well as a column stochastic weight matrix.

The research interests of the above literature focus on the convergence of distributed algorithms over different communication graphs, ignoring the privacy issues of the algorithms. In the agent's communications of their algorithms, each agent explicitly shares its state and/or gradient estimation with neighboring agents at each iteration. A natural question is whether the state and/or gradient sharing will leak sensitive data of the agents. The results in \cite{zhu2019deep} show that in distributed training and collaborative learning, the agents can precisely restore the raw training data of neighboring agents based on their shared gradients (e.g., images in computer vision tasks or texts in natural language processing tasks). This is undesirable in many applications involving sensitive data. References \cite{burbano2019inferring, alanwar2017proloc} also reveal the privacy issues in distributed optimization. In fact, without an effective privacy preservation mechanism, the attacker can easily steal the privacy of agents in distributed optimization.

Recently, designing appropriate privacy preservation mechanisms for distributed optimization has been of considerable interest and many privacy-preserving algorithms have been proposed, broadly divided into three categories: differential privacy-based methods \cite{huang2015differentially, cortes2016differential, han2017differentially, nozari2018differentially, xiong2020privacy, cupta2021preserving, ding2022differentially, zhou2023private, wang2023tailoring}, structure-based methods \cite{yan2013distributed, lou2018privacy, gade2018private, gao2023dynamics}, and cryptography-based methods \cite{freris2016distributed, yang2018privacy, ruan2019secure, zhang2019admm, zhang2019enabling, wang2019privacypreservtion, hadjicostis2020privacypreserving, alexandru2021cloudbased, yan2021distributed, hou2023encrypted, chen2023privacypreserving}. Differential privacy-based methods obscure the raw data by injecting random noise to the exchanged information between agents \cite{huang2015differentially, cortes2016differential, han2017differentially, xiong2020privacy, ding2022differentially, wang2023tailoring} or to the local objective function of agents \cite{nozari2018differentially, cupta2021preserving, zhou2023private}, thereby protecting the privacy of agents. However, for such methods, the higher the desired level of privacy, the larger the variance of required noise, which unavoidably compromises the optimization accuracy or reduces the convergence rate of the original algorithm. Structure-based methods exploit the structural properties of distributed optimization and inject correlated randomization into the algorithm to enable privacy preservation without compromising the optimization accuracy (such as the constant uncertain parameter in \cite{yan2013distributed, lou2018privacy} and ``structured'' noise in \cite{gade2018private, gao2023dynamics}). However, since the injected randomization is correlated, such methods cannot guarantee strong privacy preservation.

The third category of methods is based on cryptography (such as the Paillier cryptosystem \cite{paillier1999publickey} and the Advanced Encryption Standard (AES) \cite{rijmen2001advanced}). Cryptography-based methods enable privacy preservation in distributed optimization by encrypting the exchanged information between agents. The advantage of cryptography-based methods is that they can provide strong privacy preservation without compromising the optimization accuracy and the convergence rate. References \cite{freris2016distributed, yang2018privacy, hadjicostis2020privacypreserving, alexandru2021cloudbased, hou2023encrypted} have directly used the Paillier cryptosystem to develop privacy-preserving algorithms for different distributed optimization problems, but all require an aggregator (such as the cloud in \cite{freris2016distributed, alexandru2021cloudbased} and the system operator in \cite{yang2018privacy, hou2023encrypted}) or assume the presence of a trusted agent \cite{hadjicostis2020privacypreserving}. The reason for the limitation is that the cryptosystem cannot be directly incorporated into fully distributed algorithms to guarantee both optimization and privacy. Reference \cite{ruan2019secure} first proposes a cryptography-based privacy-preserving protocol to overcome this limitation, which can solve the average consensus problem in a fully distributed manner. In this protocol, each agent generates its own pair of keys and asks its neighbors to use its public key to encrypt the required information, and the weight is split into two random positive numbers maintained by corresponding agents to protect the privacy of agents. Inspired by the protocol, references \cite{zhang2019admm, zhang2019enabling, wang2019privacypreservtion} present privacy-preserving distributed algorithms integrating the Paillier cryptosystem to solve problem (\ref{prob_orig_0}) over undirected graphs based on the alternating direction method of multipliers (ADMM), projected subgradient method, and online dual averaging method, respectively. References \cite{yan2021distributed, chen2023privacypreserving} also adopt a similar strategy to develop cryptography-based privacy-preserving distributed algorithms for the economic dispatch problem.

Although cryptography-based privacy-preserving algorithms in \cite{zhang2019admm, zhang2019enabling, wang2019privacypreservtion, yan2021distributed, chen2023privacypreserving} have been successful in solving problem (\ref{prob_orig_0}) in a fully distributed manner, there are still serious issues that have not been solved. First, these algorithms can only be applied to undirected graphs since neighboring agents need to perform bidirectional communication. This significantly limits the deployment of these algorithms. Secondly, these algorithms adopt the Paillier cryptosystem, which will impose serious computational burdens in the implementation. In addition, neighboring agents need to exchange information twice per iteration, which also increases the communication overhead. Thirdly, most privacy-preserving algorithms require a relatively strict condition that each agent has at least one legitimate neighbor at all time instance. In response to the above issues, we propose a series of solutions, which are summarized as follows together with the contributions of this paper.

1) \textit{Extension to time-varying directed graphs and linear convergence rate:} Existing cryptography-based privacy-preserving algorithms in distributed optimization can only be applied to undirected graphs. In this paper, we propose a novel cryptography-based distributed algorithm for time-varying directed graphs by appropriately constructing the underlying weight matrices. Instead of the sub-linear convergence rate of algorithms in \cite{zhang2019admm, zhang2019enabling}, we prove that the proposed algorithm has a linear convergence rate under the assumption that the objective function is strongly convex and Lipschitz smooth.

2) \textit{Reduction of the computational and communication overheads:} Existing cryptography-based privacy-preserving algorithms for distributed optimization adopt the Paillier cryptosystem and exploit its property of partially homomorphic. Our proposed algorithm utilizes AES, which significantly reduces the computational overhead. Furthermore, AES also has advantages in security, accuracy, and hardware \& software implementation. Moreover, in the proposed algorithm, neighboring agents only exchange information once per iteration instead of twice in the existing algorithms, which reduces the communication overhead.


3) \textit{Privacy preservation under more relaxed conditions:}
The proposed distributed algorithm can preserve the privacy of agents against both the honest-but-curious adversary and the external eavesdropper by designing a weight generation rule and utilizing AES. It only requires that the agents have at least one legitimate neighbor at the initial iteration instead of at all time instance in \cite{zhang2019admm, gao2023dynamics}.

The rest of the paper is organized as follows. In Section \ref{section_problem}, we introduce the private distributed optimization problem and AES. In Section \ref{section_algorithm}, we propose a cryptography-based privacy-preserving distributed algorithm. Then, we establish the linear convergence rate of the proposed algorithm in Section \ref{section_convergence}. In Section \ref{section_privacy_analysis}, we analyze the privacy of the proposed algorithm. Finally, we conduct numerical simulations in Section \ref{section_numerical_simulation} and conclude the paper in Section \ref{section_conclusion}.

\textit{Notations}: For a time-varying directed graph sequence $\{ \mathcal{G}(0), \mathcal{G}(1), \cdots \}$, each graph $\mathcal{G}(k)$ at iteration $k$ is specified by a static set of agents $\mathcal{V} = \{1, 2, \dots, m\}$ and a time-varying set of edges $\mathcal{E}(k)$ in which $(i, j) \in \mathcal{E}(k)$ if and only if agent $i$ can receive information from agent $j$ at iteration $k$. For each agent $i$, its in-degree and out-degree neighbor sets at iteration $k$ are denoted as $\mathcal{N}_i^{\text{in}}(k)$ and $\mathcal{N}_i^{\text{out}}(k)$, respectively. Correspondingly, the in-degree and out-degree of agent $i$ at iteration $k$ are denoted as $d_i^{\text{in}}(k) = | \mathcal{N}_i^{\text{in}}(k) |$ and $d_i^{\text{out}}(k) = | \mathcal{N}_i^{\text{out}}(k) |$, respectively. We use $\mathbf{1}$ and $I$ to denote the column vector with all entries of 1 and the identity matrix respectively, whose size is determined by the context. For a given vector $x\in\mathbb{R}^{d}$, we denote the Euclidean norm of $x$ as $\| x \|_2$. For any matrix $\mathbf{x}\in\mathbb{R}^{m\times d}$, we denote the Frobenius norm of $\mathbf{x}$ as $\| \mathbf{x} \|_\text{F}$, the max norm of $\mathbf{x}$ as $\| \mathbf{x} \|_{\text{max}} = \max_{ij}|x_{ij}|$, and the spectral norm of $\mathbf{x}$ as $\| \mathbf{x} \|_{2}$. For any $m\times d$ matrices $\mathbf{x}$ and $\mathbf{y}$, we denote the inner product of $\mathbf{x}$ and $\mathbf{y}$ as $\left<\mathbf{x}, \mathbf{y} \right> = \text{Trace}(\mathbf{x}^T\mathbf{y})$.

\section{Problem Formulation and Preliminaries}\label{section_problem}
\subsection{Privacy-Preserving Distributed Optimization Problem}
We consider a system with $m$ agents in which each agent $i$ is associated with a local objective function $f_i: \mathbb{R}^d \to \mathbb{R}$ and can communicate with its neighbors through a time-varying directed communication network. All agents of the system aim to cooperatively solve the problem (\ref{prob_orig_0}) by maintaining a local copy $x_i\in\mathbb{R}^d$ of the global decision variable $x$ while protecting their private information from being disclosed. Such a privacy-preserving distributed optimization problem is formulated as follows:
\begin{subequations}\label{prob_aggr}
\begin{gather}
\mathop{\min}_{x_i\in\mathbb{R}^d, i\in\mathcal{V}} f(x) = \frac{1}{m}\sum_{i=1}^{m}f_i(x_i),\\
\text{s.t.}\ x_1=x_2=\cdots=x_m.
\end{gather}
\end{subequations}


We consider two widely-used attack models: the \textit{Honest-but-curious adversary} and the \textit{External eavesdropper}. In the following, we give the definitions of these two attackers as well as the legitimate agent.
\begin{definition}\label{definition_honest_attack}
\cite{goldreich2009foundations} (Honest-but-curious adversary) The honest-but-curious adversaries are participating agents who execute correct updates but collect information from neighboring agents, trying to infer their private information independently or in collusion.
\end{definition}

\begin{definition}\label{definition_external_attack}
\cite{goldreich2009foundations} (External eavesdropper) The external eavesdroppers are adversaries who know the updates of each agent but are not participating agents and can wiretap the exchanged information on all communication links, trying to infer the agents' private information.
\end{definition}

\begin{definition}
(Legitimate agent) The legitimate agents are participating agents who execute correct updates and have no intention to infer other agents' private information.
\end{definition}

\subsection{Advanced Encryption Standard}\label{subsection_cryptosystem}
AES is a symmetric block cipher algorithm that uses the same key to encrypt and decrypt data in blocks of 128 bits \cite{rijmen2001advanced}. The key length for AES can be 128, 192, and 256 bits. AES encryption converts data into an unintelligible form called ciphertext. The encryption process involves four transformation functions: Substitution Bytes, Shift Rows, Mix Columns, and Add Round Key. AES decryption converts the ciphertext back to its original form, called plaintext. The decryption process includes the inverse operations of those four transformation functions.

For convenience, we define the symbol $[\![ \cdot ]\!]$ to be the encryption operation of AES, that is, for plaintext $z$, the resulting ciphertext after AES encryption is $[\![ z ]\!]$.

\section{Privacy-Preserving Algorithm Design}\label{section_algorithm}
In this section, we aim to develop a cryptography-based privacy-preserving algorithm for the distributed optimization problem (\ref{prob_aggr}) over time-varying directed graphs. We first give the original updates of the algorithm and then integrate AES into the original updates to obtain the final privacy-preserving distributed algorithm. In the original updates, each agent $i$ maintains four variables $y_i(k)$, $w_i(k)$, $x_i(k)$, and $s_i(k)$ with initialization $y_i(0) = x_i(0) \in\mathbb{R}^{d}$, $w_i(0) \in \mathbb{R}$, $s_i(0) = \nabla f_i\big( x_i(0) \big) \in \mathbb{R}^d$, and $w_i(k)$ is reset to 1 when $k=1$. At each iteration $k$, each agent $i$ ($i \in \mathcal{V}$) sends weighted information $a_{li}(k)y_i(k)$, $a_{li}(k)s_i(k)$, and $a_{li}(k)w_i(k)$ to its out-neighbors $l\in\mathcal{N}_i^{\text{out}}(k)$, and receives $a_{ij}(k)y_j(k)$, $a_{ij}(k)s_j(k)$, and $a_{ij}(k)w_j(k)$ from its in-neighbors $j \in \mathcal{N}_i^{\text{in}}(k)$. After the information exchange step, each agent $i$ executes the original updates as follows: 
\begin{subequations}\label{init_upda}
	\begin{align}
	&y_i(k+1) = \sum_{j=1}^{m} a_{ij}(k) \big( y_j(k) - \eta s_j(k) \big), \label{update_a}\\
	&w_i(k+1) = \sum_{j=1}^{m} a_{ij}(k)w_j(k), \label{update_b}\\
	&x_i(k+1) = y_i(k+1)/w_i(k+1), \label{update_c}\\
	&s_i(k+1) = \sum_{j=1}^{m} a_{ij}(k)s_j(k) + \nabla f_i\big( x_i(k+1) \big) - \nabla f_i\big( x_i(k) \big), \label{update_d}
	\end{align}
\end{subequations}
where $\nabla f_i(\cdot)$ is the gradient of $f_i$, $\eta > 0$ is a fixed step-size, and the time-varying weight matrix $A(k) = \big[a_{ij}(k)\big]\in\mathbb{R}^{m\times m}$ satisfies $\mathbf{1}^TA(k) = \mathbf{1}^T$ for all $k\ge 0$ and every nonzero entry in $A(k)$ is not less than a positive constant $c_0$ for any $k \ge 1$ where $0 < c_0 < 1/m$. We give the rule for each agent $i$ to generate weights $a_{li}(k), l\in\mathcal{V}, k \ge 0$ in Table \ref{table_weight_rule}.

\begin{table}[!h]\centering
\caption{The weight generation rule for each agent $i$.}
\begin{tabular}{| c | c |}
\hline
$k = 0$ &
$ a_{li}(k) \left\{ \begin{array}{ll}
\in \mathbb{R}, & l\in\mathcal{N}_i^{\text{out}}(k), \\
= 0, & l\notin \mathcal{N}_i^{\text{out}}(k)\cup \{i\},\\
= 1-\sum_{l\in\mathcal{V}, l\not=i}a_{li}(k), & l=i.
\end{array} \right. $ \\
\hline
$k \ge 1$ & $ a_{li}(k) \left\{ \begin{array}{ll} \in \left[ c_0, \frac{1 - c_0}{d_i^{\text{out}}(k)} \right], & l\in\mathcal{N}_i^{\text{out}}(k), \\
= 0, & l\notin \mathcal{N}_i^{\text{out}}(k)\cup \{i\},\\
= 1-\sum_{l\in\mathcal{V}, l\not=i}a_{li}(k), & l=i.
\end{array}
\right.
$ \\
\hline
\end{tabular}
\label{table_weight_rule}
\end{table}

Notice that, the weight $a_{li}(k)$ generated by agent $i$ is randomly sampled with a uniform distribution at each iteration: when $k = 0$, $a_{li}(k), l\in\mathcal{N}_i^{\text{out}}(k)$ is randomly selected from $\mathbb{R}$, which can be positive, negative or zero. For $k \ge 1$, $a_{li}(k), l\in\mathcal{N}_i^{\text{out}}(k)$ is selected from a given range, which depends only on a fixed positive constant $c_0$ and the out-degree $d_i^{\text{out}}(k)$ of agent $i$. Such a weight generation rule can make $A(k)$ satisfy the corresponding conditions. It is also the key to protecting the privacy of agents from the honest-but-curious adversary (we will give a detailed analysis in Section \ref{section_privacy_analysis}). 
 
\begin{algorithm}[!h]
\caption{AES-based Privacy-preserving Distributed Algorithm}\label{alg:alg_private}
\begin{algorithmic}[1]
\REQUIRE
Generate an AES key and distribute it to all agents. Each agent $i$ chooses arbitrary $x_i(0)=y_i(0)\in\mathbb{R}^d$ and $w_i(0) \in \mathbb{R}$, and sets $s_i(0) = \nabla f_i\big(x_i(0)\big)$.

\ENSURE

\STATE\label{step_1} Agent $i$ generates random weights $a_{li}(k)$ based on the weight generation rule in Table \ref{table_weight_rule} and uses the key to encrypt $[\![ a_{li}(k)y_i(k) ]\!]$, $[\![ a_{li}(k)s_i(k) ]\!]$, and $[\![ a_{li}(k)w_i(k) ]\!]$ for $l\in\mathcal{N}_i^{\text{out}}(k)$.
\STATE\label{step_2} Agent $i$ sends ciphertexts $[\![ a_{li}(k)y_i(k) ]\!]$, $[\![ a_{li}(k)s_i(k) ]\!]$, and $[\![ a_{li}(k)w_i(k) ]\!]$ to out-neighbors $l\in\mathcal{N}_i^{\text{out}}(k)$ and receives ciphertexts $[\![ a_{ij}(k)y_j(k) ]\!]$, $[\![ a_{ij}(k)s_j(k) ]\!]$, and $[\![ a_{ij}(k)w_j(k) ]\!]$ from in-neighbors $j\in\mathcal{N}_i^{\text{in}}(k)$.
\STATE\label{step_3} Agent $i$ uses the key to decrypt ciphertexts $[\![ a_{ij}(k)y_j(k) ]\!]$, $[\![ a_{ij}(k)s_j(k) ]\!]$, and $[\![ a_{ij}(k)w_j(k) ]\!]$ obtaining $a_{ij}(k)y_j(k)$, $a_{ij}(k)s_j(k)$, and $a_{ij}(k)w_j(k)$ for $j\in\mathcal{N}_i^{\text{in}}(k)$, and executes the updates (\ref{init_upda}) to obtain $y_i(k+1)$, $w_i(k+1)$, $x_{i}(k+1)$, and $s_{i}(k+1)$.
\end{algorithmic}
\label{alg_private}
\end{algorithm}

On the other hand, since the external eavesdropper can steal all the information on the communication links, the updates (\ref{init_upda}) combined with the weight generation rule in Table \ref{table_weight_rule} cannot prevent the external eavesdropper (see Remark \ref{remark_no_crytptosystem} in Section \ref{section_privacy_analysis}). Thus in Algorithm \ref{alg_private}, we integrate the updates (\ref{init_upda}) with AES to prevent the external eavesdropper. It is noted that existing cryptography-based privacy-preserving methods in distributed optimization utilize the asymmetric Paillier cryptosystem with homomorphic addition, while Algorithm \ref{alg_private} employs the symmetric AES. The reason Algorithm \ref{alg_private} can use a symmetric encryption is that the agents send weighted information rather than explicit information to neighboring agents. In comparison to the Paillier cryptosystem, AES has significant advantages in terms of security, accuracy, hardware \& software implementation, and especially computational cost \cite{mahajan2013study}. In Section \ref{subsection_simulation_comparison}, we will give detailed numerical comparisons.

\begin{remark}
In initialization, Algorithm 1 needs to generate and distribute a shared key to all agents, which is called key distribution in modern cryptography. The key distribution can be implemented based on either symmetric or asymmetric encryption \cite{stallings2013cryptography}, the Diffie-Hellman algorithm \cite{diffie1976new}, and quantum encryption \cite{charles2014quantum}, etc. Some methods require a trusted third party and some can be implemented in a distributed manner. We assume that the shared key is well distributed using existing methods to all agents in the initialization.
\end{remark}

\section{Convergence Analysis}\label{section_convergence}
In this section, we theoretically prove the linear convergence rate of Algorithm \ref{alg_private}. We first rewrite the updates (\ref{init_upda}) into an augmented form: 
	\begin{subequations}
	\begin{align}
	\mathbf{w}(k+1)  =\ &A(k)\mathbf{w}(k), \label{update_compact_w}\\
	\mathbf{x}(k+1) =\ &\Phi(k)\big( \mathbf{x}(k) - \eta \mathbf{u}(k) \big), \label{update_2b}\\
	\mathbf{u}(k+1) =\ &\Phi(k)\mathbf{u}(k) \nonumber \\
	+&W(k+1)^{-1}(\nabla \mathbf{f}(\mathbf{x}(k+1)) - \nabla \mathbf{f}(\mathbf{x}(k))) \label{update_2c},
	\end{align}
	\end{subequations}
where,
	\begin{align*}
&\Phi(k) = W(k+1)^{-1}A(k)W(k), \\
&W(k+1) = \text{diag}\{\mathbf{w}(k+1)\}, \\
&\mathbf{u}(k) = W(k)^{-1}\mathbf{s}(k), \\
&\mathbf{w}(k) = [w_1(k), \cdots, w_m(k)]^T \in \mathbb{R}^m, \\
&\mathbf{x}(k) = [x_1(k), \cdots, x_m(k)]^T \in \mathbb{R}^{m \times d}, \\
&\mathbf{s}(k) = [s_1(k), \cdots, s_m(k)]^T\in\mathbb{R}^{m \times d}, \\
&\nabla \mathbf{f}(\mathbf{x}(k)) = [\nabla f_1(x_1(k)), \cdots, \nabla f_m(x_m(k))]^T \in \mathbb{R}^{m \times d}.
	\end{align*}
	
We also denote the notation $\Phi_{b}(k)$ as $\Phi_{b}(k) = \Phi(k)\Phi(k-1)\cdots \Phi(k-b+1), k = 0, 1, \cdots, b = 1, 2, \cdots, k+1$. For any matrix $\mathbf{a}\in \mathbb{R}^{m\times d}$, we denote the average across columns of $\mathbf{a}$ as $\bar{a} = \frac{1}{m}\mathbf{a}^T\mathbf{1} \in \mathbb{R}^d$ and the consensus violation of $\mathbf{a}$ as $\check{\mathbf{a}} = \mathbf{a} - \mathbf{1}\bar{a}^T = (I - \frac{1}{m}\mathbf{1}\mathbf{1}^T)\mathbf{a} = \mathbf{R}\mathbf{a}$ where $\mathbf{R} = I - \frac{1}{m}\mathbf{1}\mathbf{1}^T$. We denote the $\mathbf{R}$ weighted norm of matrix $\mathbf{a}$ as $\| \mathbf{a} \|_\text{R} = \sqrt{\left<\mathbf{a}, \mathbf{R}\mathbf{a} \right>}$. Due to $\mathbf{R} = \mathbf{R}^T \mathbf{R}$, we have $\| \mathbf{a} \|_{\text{R}} = \| \mathbf{R}\mathbf{a} \|_{\text{F}} = \|  \check{\mathbf{a}} \|_{\text{F}}$. For a sequence $\{ \mathbf{a}(0), \mathbf{a}(1), \cdots \}$ with $\mathbf{a}(k)\in \mathbb{R}^{m\times d}$, we denote $
	\| \mathbf{a} \|_{\text{F}}^{\theta, K} = \max_{k = 1, \cdots, K} \frac{1}{\theta^k}\| \mathbf{a}(k) \|_{\text{F}}
	$
where parameter $\theta \in (0, 1)$. If $\mathbf{a}(k)\in \mathbb{R}^{d}$, $\| \mathbf{a} \|_{\text{F}}^{\theta, K} = \max_{k = 1, \cdots, K} \frac{1}{\theta^k}\| \mathbf{a}(k) \|_{\text{2}}$. For the convenience of analysis, we denote $\mathbf{r}(k) = \mathbf{x}(k) - \mathbf{x}^*$ for $k \ge 0$ where $\mathbf{x}^* = \mathbf{1}{x^*}^T$ and $x^*$ is the optimal solution of problem (\ref{prob_orig_0}), $\mathbf{v}(k) = \nabla \mathbf{f}(\mathbf{x}(k)) - \nabla \mathbf{f}(\mathbf{x}(k-1))$ for $k \ge 1$ and $\mathbf{v}(0) = \mathbf{0}$ when $k=0$, $\check{\mathbf{u}}(k) = \mathbf{u}(k) - \mathbf{1}\bar{u}(k)^T$, and $\check{\mathbf{x}}(k) = \mathbf{x}(k) - \mathbf{1}\bar{x}(k)^T$.

Our main strategy is first to establish the following inequalities with respect to $\|\mathbf{r}\|_{\text{F}}^{\theta,K}$, $\| \mathbf{v} \|_{\text{F}}^{\theta, K}$, $\| \check{\mathbf{u}} \|_{\text{F}}^{\theta, K}$, and $\| \check{\mathbf{x}} \|_{\text{F}}^{\theta, K}$:
	\begin{equation}\begin{aligned}
	& \| \mathbf{v} \|_{\text{F}}^{\theta, K} \le \gamma_1 \|\mathbf{r}\|_{\text{F}}^{\theta,K}+b_1, \quad
	\| \check{\mathbf{u}} \|_{\text{F}}^{\theta, K} \le \gamma_2 \| \mathbf{v} \|_{\text{F}}^{\theta, K} + b_2, \\
	& \| \check{\mathbf{x}} \|_{\text{F}}^{\theta, K} \le \gamma_3 \| \check{\mathbf{u}} \|_{\text{F}}^{\theta, K} + b_3, \quad
	\|\mathbf{r}\|_{\text{F}}^{\theta,K} \le \gamma_4 \| \check{\mathbf{x}} \|_{\text{F}}^{\theta, K} + b_4,
	\end{aligned}\end{equation}
where $\gamma_1, \gamma_2, \gamma_3, \gamma_4$ are non-negative gain constants and $b_1, b_2, b_3, b_4$ are constants. Then, by using the small gain theorem \cite{nedic2017achieving}, we can derive that $\| \mathbf{r}(k) \|_{\text{F}}$, i.e., $\|\mathbf{x}(k) - \mathbf{x}^*\|_{\text{F}}$ converges at a linear rate if $\gamma_1\gamma_2\gamma_3\gamma_4 < 1$ holds.
\subsection{Basic Assumptions and Supporting Lemmas}
We first present some basic assumptions, on which Algorithm \ref{alg_private} has a linear convergence rate over time-varying directed graphs.
\begin{assumption}\label{smooth} 
The local objective function of each agent $i$ is $L_i$-smooth, that is, $$\| \nabla f_i(x) - \nabla f_i(y) \|_{2} \le L_i \| x-y \|_{2},\ \forall x,y\in\mathbb{R}^d,$$ where $L_i>0$ is the Lipschitz constant of $\nabla f_i (\cdot)$.
\end{assumption}

\begin{assumption}\label{convex}
The local objective function of each agent $i$ is $\mu_i$-strongly convex, that is, $$f_i(y) \ge f_i(x) + \nabla f_i(x)^T(y-x) + \frac{\mu_i}{2}\| y-x \|_{2}^2,\ \forall x,y\in\mathbb{R}^d,$$ where $\mu_i \in [0, +\infty)$ and $\sum_{i=1}^{m} \mu_i > 0$.
\end{assumption}

\begin{assumption}\label{connection}
The graph sequence $\{ \mathcal{G}(k) \}$ is uniformly strongly connected, that is, there exists a positive integer $\tilde{B}$ such that for any $t\ge 0$,
	\begin{equation}
	\mathcal{G}_{\tilde{B}}(t \tilde{B}) =\left\{\mathcal{V}, \bigcup_{l = t \tilde{B}}^{t \tilde{B} + \tilde{B} - 1} \mathcal{E}(l) \right\},
	\end{equation}
is strongly connected.
\end{assumption}

We denote $\hat{L} = \max_i\{L_i\}$ and $\bar{L} = (1/m)\sum_{i=1}^{m}L_i$, which are the Lipschitz constants of $\nabla \bold{f(x)}$ and $\nabla f(x)$ under Assumption \ref{smooth}, respectively. Similarly, we denote $\hat{\mu} = \max_i\{\mu_i\}$ and $\bar{\mu} = (1/m)\sum_{i=1}^{m}\mu_i$. Then, $f(x)$ is $\bar{\mu}$-strongly convex under Assumption \ref{convex}, which implies that problem (\ref{prob_orig_0}) has a unique optimal solution $x^*$. We also denote $\kappa = \hat{L}/\bar{\mu}$. Let $B =  2 \tilde{B} - 1$, then the $\mathcal{G}_B(k)$ is strongly connected for any $k \ge 0$ based on Assumption \ref{connection}.

To present the main results clearly, we need more notations. Recall that $c_0$ is the parameter to constrain the weights in Table \ref{table_weight_rule}, and $m$ is the number of agents. Denote $\sigma = (c_0)^{2+m B}$ and $\epsilon = 2m (1+\sigma^{-m B})/(1 - \sigma^{m B})$. Let $B_0 \ge B$ be such that
	\begin{align}\label{equation_varepsilon}
	\varepsilon = \epsilon (1 - \sigma^{m B})^{(B_0 - 1)/(m B)} < 1.
	\end{align}
There exists $\theta \in (0, 1)$ such that
	\begin{align}\label{equation_theta}
	\varepsilon < \theta^{B_0} < 1.
	\end{align}

Consider the dynamic updates (\ref{update_compact_w})$\sim$(\ref{update_2c}) of Algorithm \ref{alg_private} and the weight generation rule in Table \ref{table_weight_rule}. If Assumptions \ref{smooth}, \ref{convex}, and \ref{connection} hold, we have the following lemmas. Their proofs are given in Appendix \ref{proof_lemmas}.



\begin{lemma}\label{lemma_vr}
For any $\theta \in (0,1)$ and $K = 1, 2, \dots$, we have
	\begin{equation}
	\label{db_0}
	\| \mathbf{v} \|_{\text{F}}^{\theta, K} \le \gamma_1 \| \mathbf{r} \|_{\text{F}}^{\theta, K} + b_1,
	\end{equation}
where $\gamma_1 = \hat{L} (1+\theta^{-1})$ and $b_1 = \theta^{-1} \| \mathbf{v}(1) \|_{\text{F}}$.
\end{lemma}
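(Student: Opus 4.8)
The plan is to bound $\|\mathbf{v}(k)\|_{\text{F}}$ pointwise in $k$ using Lipschitz smoothness, convert the resulting bound into the weighted $\|\cdot\|_{\text{F}}^{\theta,K}$ norm, and treat $k=1$ separately because the sequence norm $\|\mathbf{r}\|_{\text{F}}^{\theta,K}$ only controls indices $k\ge 1$ (not $\mathbf{r}(0)$).

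First I would observe that $\nabla\mathbf{f}(\cdot)$ is $\hat L$-Lipschitz in the Frobenius norm under Assumption \ref{smooth} (this is the fact recorded just before the lemma), so for every $k\ge 1$,
\[
\|\mathbf{v}(k)\|_{\text{F}} = \|\nabla\mathbf{f}(\mathbf{x}(k)) - \nabla\mathbf{f}(\mathbf{x}(k-1))\|_{\text{F}} \le \hat L\,\|\mathbf{x}(k)-\mathbf{x}(k-1)\|_{\text{F}}.
\]
Since $\mathbf{x}(k)-\mathbf{x}(k-1) = \mathbf{r}(k)-\mathbf{r}(k-1)$, the triangle inequality gives $\|\mathbf{v}(k)\|_{\text{F}} \le \hat L(\|\mathbf{r}(k)\|_{\text{F}} + \|\mathbf{r}(k-1)\|_{\text{F}})$ for all $k\ge 1$.

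Next, for $k\ge 2$ I would divide by $\theta^k$ and rewrite the second term as $\theta^{-1}\cdot\theta^{-(k-1)}\|\mathbf{r}(k-1)\|_{\text{F}}$; since both $k$ and $k-1$ lie in $\{1,\dots,K\}$, each of $\theta^{-k}\|\mathbf{r}(k)\|_{\text{F}}$ and $\theta^{-(k-1)}\|\mathbf{r}(k-1)\|_{\text{F}}$ is at most $\|\mathbf{r}\|_{\text{F}}^{\theta,K}$, yielding $\theta^{-k}\|\mathbf{v}(k)\|_{\text{F}} \le \hat L(1+\theta^{-1})\|\mathbf{r}\|_{\text{F}}^{\theta,K} = \gamma_1\|\mathbf{r}\|_{\text{F}}^{\theta,K}$. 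For the boundary case $k=1$, I would simply write $\theta^{-1}\|\mathbf{v}(1)\|_{\text{F}} = b_1$ by definition of $b_1$. Taking the maximum over $k=1,\dots,K$ and using $\max\{b_1,\gamma_1\|\mathbf{r}\|_{\text{F}}^{\theta,K}\}\le b_1+\gamma_1\|\mathbf{r}\|_{\text{F}}^{\theta,K}$ gives \eqref{db_0}.

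This argument is essentially a routine telescoping-plus-Lipschitz estimate; the only subtlety — and the one point that genuinely requires care — is that the norm $\|\mathbf{r}\|_{\text{F}}^{\theta,K}$ does not see $\mathbf{r}(0)$, which is why the $k=1$ term must be peeled off into the constant $b_1$ rather than absorbed into $\gamma_1\|\mathbf{r}\|_{\text{F}}^{\theta,K}$, and why the factor $\theta^{-1}$ (rather than $1$) appears in front of the $\|\mathbf{r}(k-1)\|_{\text{F}}$ contribution to $\gamma_1$.
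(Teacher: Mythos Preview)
Your proof is correct and follows essentially the same route as the paper: apply the $\hat L$-Lipschitz bound to $\mathbf{v}(k)$, insert $\pm\mathbf{x}^*$ to convert to $\mathbf{r}$-terms, rescale by $\theta^{-k}$, and take the maximum, peeling off the $k=1$ term as the constant $b_1$. Your treatment of the boundary case $k=1$ is in fact more explicit than the paper's, which leaves that step implicit in the final sentence of its proof.
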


\begin{lemma}\label{lemma_uv}
Let $B_0, \varepsilon$ be given by (\ref{equation_varepsilon}) and $\theta$ satisfy (\ref{equation_theta}). For any $K = 1, 2, \dots$, we have
	\begin{equation}\label{ud_0}
	\begin{aligned}
	\| \check{\mathbf{u}} \|_{\text{F}}^{\theta, K} \le \gamma_2 \| \mathbf{v} \|_{\text{F}}^{\theta, K} + b_2,
	\end{aligned}
	\end{equation}
where $\gamma_2 = \frac{\epsilon \| W^{-1} \|_{\max}^{1} \theta (1 - \theta^{B_0})}{(\theta^{B_0} - \varepsilon)(1 - \theta)}$, $\| W^{-1} \|_{\max}^{1} = \sup_{k\ge 1} \| W(k)^{-1} \|_{\max} \le 1/(c_0)^{m B}$, and $b_2 = \frac{\theta^{B_0}}{\theta^{B_0} - \varepsilon} \sum_{i=1}^{B_0} \theta^{-i} \| \check{\mathbf{u}}(i) \|_{\text{F}}$.
\end{lemma}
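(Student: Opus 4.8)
The plan is to unroll the $\mathbf{u}$-recursion over a window of length $B_0$, project it with $\mathbf{R}$ onto the disagreement subspace, exploit that $\mathbf{R}$ composed with any length-$B_0$ transition product $\Phi_{B_0}$ is a strict contraction of factor $\varepsilon$, and then solve the resulting $B_0$-step linear recursion for $c_k := \theta^{-k}\|\check{\mathbf{u}}(k)\|_{\text{F}}$, which is legitimate because $\varepsilon<\theta^{B_0}$ by (\ref{equation_theta}). Concretely, (\ref{update_2c}) together with the definition of $\mathbf{v}$ reads $\mathbf{u}(k+1)=\Phi(k)\mathbf{u}(k)+W(k+1)^{-1}\mathbf{v}(k+1)$, which unrolls, for $k>B_0$, to
\[
\mathbf{u}(k)=\Phi_{B_0}(k-1)\mathbf{u}(k-B_0)+\sum_{t=k-B_0+1}^{k}\Phi_{k-t}(k-1)W(t)^{-1}\mathbf{v}(t),\qquad \Phi_0(\cdot):=I.
\]
The structural fact that makes everything go is $\Phi(k)\mathbf{1}=\mathbf{1}$, immediate from $\mathbf{w}(k+1)=A(k)\mathbf{w}(k)$ and $W(k+1)=\mathrm{diag}\{\mathbf{w}(k+1)\}$; hence $\Phi_{B_0}(k-1)\mathbf{1}=\mathbf{1}$, so $\Phi_{B_0}(k-1)\mathbf{u}(k-B_0)=\Phi_{B_0}(k-1)\check{\mathbf{u}}(k-B_0)+\mathbf{1}\bar{u}(k-B_0)^T$, and applying $\mathbf{R}$ (which annihilates $\mathbf{1}\bar{u}^T$) yields
\[
\check{\mathbf{u}}(k)=\mathbf{R}\Phi_{B_0}(k-1)\check{\mathbf{u}}(k-B_0)+\sum_{t=k-B_0+1}^{k}\mathbf{R}\Phi_{k-t}(k-1)W(t)^{-1}\mathbf{v}(t).
\]

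Two supporting estimates feed in, both consequences of Assumption \ref{connection} and the fact that for $k\ge1$ every nonzero entry of $A(k)$ (in particular, by the construction in Table \ref{table_weight_rule}, every diagonal entry) is at least $c_0$. First, since $\mathbf{w}(1)=\mathbf{1}$ and each $A(k)$, $k\ge1$, is nonnegative and column-stochastic while the union graph over any $B$-window is strongly connected, every $w_i(k)$ with $k\ge1$ is bounded below by $(c_0)^{mB}$, giving $\|W(t)^{-1}\|_{\max}\le\|W^{-1}\|_{\max}^{1}\le1/(c_0)^{mB}$. Second --- and this is the heart of the argument --- the disagreement operator applied to a transition product contracts geometrically in the window length: $\|\mathbf{R}\Phi_b(k)\|_2\le\epsilon\,(1-\sigma^{mB})^{(b-1)/(mB)}$ for $b\ge1$, with $\|\mathbf{R}\Phi_0(k)\|_2=1\le\epsilon$. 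At $b=B_0$ the right-hand side is exactly $\varepsilon<1$ by (\ref{equation_varepsilon}), while for the finitely many shorter lags $b<B_0$ the crude bound $\le\epsilon$ is all that is needed. I expect this second estimate to be the main obstacle; the route is the classical one for forward products of column-stochastic matrices over uniformly strongly connected graphs: write $\Phi_b(k)=W(k+1)^{-1}\bigl(A(k)\cdots A(k-b+1)\bigr)W(k-b+1)$, show that over $mB$ consecutive steps the product $A(k)\cdots A(k-b+1)$ acquires a strictly positive column bounded below by $(c_0)^{mB}$, deduce that its coefficient of ergodicity contracts by the factor $1-\sigma^{mB}$, and propagate this through the $\mathbf{w}$-bounds for the two conjugating diagonal factors $W(k+1)^{-1}$, $W(k-b+1)$ --- this is where the extra $c_0^{2}$ in $\sigma$ and the factor $2m$ in $\epsilon$ enter.

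To assemble, take Frobenius norms in the identity for $\check{\mathbf{u}}(k)$; since $W(t)^{-1}$ is diagonal, $\|W(t)^{-1}\mathbf{v}(t)\|_{\text{F}}\le\|W(t)^{-1}\|_{\max}\|\mathbf{v}(t)\|_{\text{F}}$, so with the two estimates above, for $k>B_0$,
\[
\|\check{\mathbf{u}}(k)\|_{\text{F}}\le\varepsilon\,\|\check{\mathbf{u}}(k-B_0)\|_{\text{F}}+\epsilon\,\|W^{-1}\|_{\max}^{1}\sum_{t=k-B_0+1}^{k}\|\mathbf{v}(t)\|_{\text{F}}.
\]
Dividing by $\theta^k$ and using $\theta^{-k}\sum_{t=k-B_0+1}^{k}\|\mathbf{v}(t)\|_{\text{F}}\le\bigl(\sum_{j=0}^{B_0-1}\theta^{-j}\bigr)\|\mathbf{v}\|_{\text{F}}^{\theta,K}$ gives the $B_0$-step recursion $c_k\le(\varepsilon/\theta^{B_0})\,c_{k-B_0}+\epsilon\,\|W^{-1}\|_{\max}^{1}\bigl(\sum_{j=0}^{B_0-1}\theta^{-j}\bigr)\|\mathbf{v}\|_{\text{F}}^{\theta,K}$, contractive since $\varepsilon/\theta^{B_0}<1$. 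Iterating it down to an index $r\in\{1,\dots,B_0\}$, controlling the iterates $k\le B_0$ directly, and taking the maximum over $k=1,\dots,K$, the geometric factor $1/(1-\varepsilon/\theta^{B_0})=\theta^{B_0}/(\theta^{B_0}-\varepsilon)$ multiplies both parts; combined with $\sum_{j=0}^{B_0-1}\theta^{-j}=(1-\theta^{B_0})/(\theta^{B_0-1}(1-\theta))$, the cancellation $\theta^{B_0}\cdot\theta^{-(B_0-1)}=\theta$ turns the $\mathbf{v}$-coefficient into precisely $\gamma_2=\epsilon\|W^{-1}\|_{\max}^{1}\theta(1-\theta^{B_0})/\bigl((\theta^{B_0}-\varepsilon)(1-\theta)\bigr)$, and the damped initialization terms --- bounded loosely via $(\varepsilon/\theta^{B_0})^q\le1\le\theta^{B_0}/(\theta^{B_0}-\varepsilon)$ --- collect into $b_2=\frac{\theta^{B_0}}{\theta^{B_0}-\varepsilon}\sum_{i=1}^{B_0}\theta^{-i}\|\check{\mathbf{u}}(i)\|_{\text{F}}$. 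One last point: the reset $\mathbf{w}(1)=\mathbf{1}$ and the unconstrained $A(0)$ perturb only the first $O(B_0)$ iterates, which are exactly those absorbed into $b_2$, so the analysis is run from $k=1$ onward without loss.
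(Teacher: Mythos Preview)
Your proposal is correct and follows essentially the same route as the paper: unroll (\ref{update_2c}) over a $B_0$-window, invoke the contraction $\|\Phi_{B_0}(k)D\|_{\text{R}}\le\varepsilon\|D\|_{\text{R}}$ (the paper's auxiliary Lemma~\ref{contraction}, proved via the $w_i(k)\ge(c_0)^{mB}$ bound and Lemma~4 of \cite{nedic2009distributed}) together with the crude $\epsilon$-bound for the shorter products, multiply by $\theta^{-k}$, and solve the resulting contractive $B_0$-step recursion. The only cosmetic difference is that the paper closes the recursion by taking $\max_{k}$ on both sides to obtain a self-referential inequality in $\|\check{\mathbf{u}}\|_{\text{F}}^{\theta,K}$ and then rearranging, whereas you iterate the recursion down to the initial block and sum the geometric series; both yield exactly the stated $\gamma_2$ and $b_2$.
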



\begin{lemma}\label{lemma_xu}
With the same assumptions of Lemma \ref{lemma_uv}, for any $K = 1, 2, \dots$, we have
	\begin{equation}\label{xu_0}
	\begin{aligned}
	\| \check{\mathbf{x}} \|_{\text{F}}^{\theta, K} \le \gamma_3 \| \check{\mathbf{u}} \|_{\text{F}}^{\theta, K} + b_3,
	\end{aligned}
	\end{equation}
where $\gamma_3 = \frac{\eta}{\theta^{B_0} - \varepsilon}\Big( \varepsilon + \frac{\epsilon (1 - \theta^{B_0 - 1})}{1 - \theta} \Big)$ and $b_3 = \frac{\theta^{B_0}}{\theta^{B_0} - \varepsilon}\sum_{i = 1}^{B_0} \theta^{-i}\| \check{\mathbf{x}}(i) \|_{\text{F}}$.
\end{lemma}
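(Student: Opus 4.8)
The plan is to unroll the $\mathbf{x}$-recursion (\ref{update_2b}) over a window of $B_0$ iterations and to exploit that each $\Phi(k)$ fixes $\mathbf{1}$. From $\mathbf{w}(k+1)=A(k)\mathbf{w}(k)$ in (\ref{update_compact_w}) together with $W(k)\mathbf{1}=\mathbf{w}(k)$ one gets $\Phi(k)\mathbf{1}=\mathbf{1}$, hence $\Phi_b(k)\mathbf{1}=\mathbf{1}$ for every $b$; combined with $\mathbf{R}\mathbf{1}=\mathbf{0}$ this yields $\mathbf{R}\Phi_b(k)\mathbf{z}=\mathbf{R}\Phi_b(k)\check{\mathbf{z}}$ for any $\mathbf{z}$. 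Iterating (\ref{update_2b}) $B_0$ times and applying $\mathbf{R}$ then gives, for $k\ge B_0$,
\[
\check{\mathbf{x}}(k+1)=\mathbf{R}\Phi_{B_0}(k)\,\check{\mathbf{x}}(k-B_0+1)-\eta\sum_{j=0}^{B_0-1}\mathbf{R}\Phi_{j+1}(k)\,\check{\mathbf{u}}(k-j),
\]
in which every index on the right is at least $1$ (and the relevant $\Phi$'s have index $\ge1$, where the reset $\mathbf{w}(1)=\mathbf{1}$ makes the above identities valid).

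The next step is to insert the contraction estimate for the transition matrices, which the appendix lemmas supply: using Assumption \ref{connection}, the lower bound $c_0$ on the nonzero weights, and the boundedness of $W(k)^{\pm1}$, one has $\|\mathbf{R}\Phi_b(k)\|_2\le\epsilon(1-\sigma^{mB})^{(b-1)/(mB)}$ for all $b\ge1$; in particular $\|\mathbf{R}\Phi_{B_0}(k)\|_2\le\varepsilon$ by the choice of $B_0$ in (\ref{equation_varepsilon}), while $\|\mathbf{R}\Phi_{j+1}(k)\|_2\le\epsilon$ for $j=0,\dots,B_0-2$. Taking Frobenius norms above and using $\|M\mathbf{a}\|_{\text{F}}\le\|M\|_2\|\mathbf{a}\|_{\text{F}}$,
\[
\|\check{\mathbf{x}}(k+1)\|_{\text{F}}\le\varepsilon\|\check{\mathbf{x}}(k-B_0+1)\|_{\text{F}}+\eta\varepsilon\|\check{\mathbf{u}}(k-B_0+1)\|_{\text{F}}+\eta\epsilon\sum_{j=0}^{B_0-2}\|\check{\mathbf{u}}(k-j)\|_{\text{F}}.
\]

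Finally I would pass to the $\theta$-weighted norm. Dividing by $\theta^{k+1}$ and using $\|\mathbf{a}(\ell)\|_{\text{F}}\le\theta^{\ell}\|\mathbf{a}\|_{\text{F}}^{\theta,K}$ for $1\le\ell\le K$, the right-hand side is bounded by $\varepsilon\theta^{-B_0}\|\check{\mathbf{x}}\|_{\text{F}}^{\theta,K}$ plus $\eta$ times a finite $\theta$-geometric combination of $\|\check{\mathbf{u}}\|_{\text{F}}^{\theta,K}$; taking the maximum over $k+1=B_0+1,\dots,K$ and bounding the initial indices $k=1,\dots,B_0$ crudely by $\sum_{i=1}^{B_0}\theta^{-i}\|\check{\mathbf{x}}(i)\|_{\text{F}}$ produces $\|\check{\mathbf{x}}\|_{\text{F}}^{\theta,K}\le\varepsilon\theta^{-B_0}\|\check{\mathbf{x}}\|_{\text{F}}^{\theta,K}+(\cdots)\|\check{\mathbf{u}}\|_{\text{F}}^{\theta,K}+\sum_{i=1}^{B_0}\theta^{-i}\|\check{\mathbf{x}}(i)\|_{\text{F}}$. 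Since $\theta$ satisfies (\ref{equation_theta}) we have $\varepsilon\theta^{-B_0}<1$, so the self-referential term moves to the left and one divides by $1-\varepsilon\theta^{-B_0}=(\theta^{B_0}-\varepsilon)/\theta^{B_0}$; collecting constants (and using $\theta^{i+1}\le\theta^i$ to tidy the geometric sum) reproduces $\gamma_3=\frac{\eta}{\theta^{B_0}-\varepsilon}\big(\varepsilon+\frac{\epsilon(1-\theta^{B_0-1})}{1-\theta}\big)$ and $b_3=\frac{\theta^{B_0}}{\theta^{B_0}-\varepsilon}\sum_{i=1}^{B_0}\theta^{-i}\|\check{\mathbf{x}}(i)\|_{\text{F}}$. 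This follows the same template as Lemma \ref{lemma_uv}; the differences are that the forcing here enters as $\eta\mathbf{u}$ (hence the prefactor $\eta$ in $\gamma_3$) and no $W(k)^{-1}$ multiplies the right-hand side, so $\|W^{-1}\|_{\max}^1$ is absent. I expect the only genuinely delicate point to be the contraction bound $\|\mathbf{R}\Phi_{B_0}(k)\|_2\le\varepsilon<1$: this is where uniform strong connectivity and the weight floor $c_0$ are used to turn a length-$B_0$ product of row-stochastic, support-wise uniformly positive matrices into a contraction on the disagreement subspace; everything after that is bookkeeping with geometric series.
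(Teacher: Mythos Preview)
Your proposal is correct and matches the paper's own proof essentially step for step: the paper also unrolls (\ref{update_2b}) over $B_0$ iterations, applies Lemma~\ref{contraction} (which is precisely your $\|\mathbf{R}\Phi_{B_0}(k)\|_2\le\varepsilon$ bound) to obtain the same pointwise inequality (\ref{xu_1}), multiplies by $\theta^{-(k+1)}$, handles $k\le B_0$ trivially, and then rearranges using $\varepsilon<\theta^{B_0}$. Your identification of the contraction lemma as the only nontrivial ingredient, and of the structural parallel with Lemma~\ref{lemma_uv}, is exactly how the paper proceeds.
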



\begin{lemma}\label{lemma_rx}
Suppose that $\sqrt{1 - \frac{\alpha \eta \bar{\mu}}{\alpha+1}} \le \theta < 1$ and $\eta \le 1/((1+\beta)\bar{L})$ where $\alpha > 0$ and $\beta > 0$. For any $K = 1, 2, \dots$, we have
	\begin{equation}\label{bx_0}
	\begin{aligned}
	\| \mathbf{r} \|_{\text{F}}^{\theta, K} \le \gamma_4 \| \check{\mathbf{x}} \|_{\text{F}}^{\theta, K} + b_4,
	\end{aligned}
	\end{equation}
where $\gamma_4 = (1+\sqrt{m}) \left( 1 + \frac{\sqrt{m}}{\theta} \sqrt{ \frac{\hat{L}(1+\beta) + \alpha \beta \hat{\mu}}{\bar{\mu} \beta}}\right)$ and $b_4 = 2\sqrt{m}\| \bar{y}(1) - x^* \|_{2}$.
\end{lemma}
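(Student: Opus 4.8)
The plan is to decompose $\mathbf{r}(k)=\mathbf{x}(k)-\mathbf{x}^*$ into its consensus component and the orthogonal complement, and then to bound the drift of the network average toward $x^*$ by an inexact gradient-descent estimate. Since $\mathbf{x}^*=\mathbf{1}{x^*}^T$, write $\mathbf{r}(k)=\check{\mathbf{x}}(k)+\mathbf{1}(\bar{x}(k)-x^*)^T$; because $\check{\mathbf{x}}(k)^T\mathbf{1}=0$, the two terms are orthogonal in the Frobenius inner product, so $\|\mathbf{r}(k)\|_{\text{F}}\le\|\check{\mathbf{x}}(k)\|_{\text{F}}+\sqrt{m}\,\|\bar{x}(k)-x^*\|_{2}$. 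It then remains to bound $\|\bar{x}(k)-x^*\|_{2}$ by something of the form $C_1\theta^{k}\|\check{\mathbf{x}}\|_{\text{F}}^{\theta,K}+C_2\theta^{k}\|\bar{y}(1)-x^*\|_{2}$, after which dividing by $\theta^{k}$ and taking the maximum over $k$ gives the claim.

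To get at $\|\bar{x}(k)-x^*\|_{2}$ I would work through the unweighted column average $\bar{y}(k)=\tfrac{1}{m}\mathbf{y}(k)^T\mathbf{1}$. Telescoping $(\ref{update_d})$ from the initialization $\mathbf{s}(0)=\nabla\mathbf{f}(\mathbf{x}(0))$ and using $\mathbf{1}^TA(k)=\mathbf{1}^T$ yields the gradient-tracking identity $\mathbf{1}^T\mathbf{s}(k)=\mathbf{1}^T\nabla\mathbf{f}(\mathbf{x}(k))$ for all $k$, so $(\ref{update_a})$ collapses to the exact recursion $\bar{y}(k+1)=\bar{y}(k)-\tfrac{\eta}{m}\sum_{i=1}^{m}\nabla f_i(x_i(k))$, i.e. an inexact gradient step for $f$ whose inexactness stems only from the disagreement among the $x_i(k)$. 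Next, since $y_i(k)=w_i(k)x_i(k)$ and the reset $\mathbf{w}(1)=\mathbf{1}$ together with column stochasticity force $\mathbf{1}^T\mathbf{w}(k)=m$ for every $k\ge 1$, one can write $\bar{x}(k)-\bar{y}(k)=\tfrac{1}{m}\sum_i(1-w_i(k))(x_i(k)-\bar{x}(k))$ and hence $\|\bar{x}(k)-\bar{y}(k)\|_{2}\le c_m\|\check{\mathbf{x}}(k)\|_{\text{F}}$ with a constant $c_m$ depending only on $m$ (using $w_i(k)\ge 0$ and $\sum_i w_i(k)=m$); in particular $\bar{x}(1)=\bar{y}(1)$, which explains why the residual constant of the lemma is written through $\bar{y}(1)$.

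The core of the proof is a one-iteration contraction for $\|\bar{y}(k)-x^*\|_{2}$. Writing $\bar{y}(k+1)-x^*=\bigl(\bar{y}(k)-\eta\nabla f(\bar{y}(k))-x^*\bigr)-\eta\,e(k)$ with $e(k)=\tfrac{1}{m}\sum_i\bigl(\nabla f_i(x_i(k))-\nabla f_i(\bar{y}(k))\bigr)$, I would combine the $\bar{\mu}$-strong convexity and $\bar{L}$-smoothness of $f$ with $\eta\le 1/((1+\beta)\bar{L})\le 1/\bar{L}$ to get $\|\bar{y}(k)-\eta\nabla f(\bar{y}(k))-x^*\|_{2}^{2}\le(1-\eta\bar{\mu})\|\bar{y}(k)-x^*\|_{2}^{2}$; use Assumption~\ref{smooth} together with the previous paragraph to get $\|e(k)\|_{2}\le C\|\check{\mathbf{x}}(k)\|_{\text{F}}$ with $C$ depending on $\hat{L}$ and $m$; and split the cross term by Young's inequality with parameters tuned to $\alpha$ and (through the slack in the step-size) $\beta$, arriving at $\|\bar{y}(k+1)-x^*\|_{2}^{2}\le q^{2}\|\bar{y}(k)-x^*\|_{2}^{2}+\xi(k)^{2}$, where $q^{2}$ equals $1-\eta\bar{\mu}$ up to a controllable Young perturbation and hence $q^{2}<1-\tfrac{\alpha\eta\bar{\mu}}{\alpha+1}\le\theta^{2}$ — so the lemma's lower bound on $\theta$ secures a positive gap $\theta^{2}-q^{2}$ of order $\eta\bar{\mu}/(\alpha+1)$ — and $\xi(k)\le C'\theta^{k}\|\check{\mathbf{x}}\|_{\text{F}}^{\theta,K}$ for all $k\le K$.

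To conclude, I would divide this recursion by $\theta^{2(k+1)}$, observe that $\xi(k)^{2}/\theta^{2(k+1)}\le (C')^{2}(\|\check{\mathbf{x}}\|_{\text{F}}^{\theta,K})^{2}/\theta^{2}$, and take the maximum over $k=1,\dots,K$; the strict positivity of $\theta^{2}-q^{2}$ then lets me solve the resulting max-inequality to obtain $\max_{k\le K}\theta^{-k}\|\bar{y}(k)-x^*\|_{2}\le\tfrac{1}{\theta}\|\bar{y}(1)-x^*\|_{2}+\tfrac{C'}{\sqrt{\theta^{2}-q^{2}}}\|\check{\mathbf{x}}\|_{\text{F}}^{\theta,K}$. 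Substituting this together with the bound on $\|\bar{x}(k)-\bar{y}(k)\|_{2}$ back into the first-paragraph decomposition, using $\bar{x}(1)=\bar{y}(1)$ and $\theta\in(0,1)$, and collecting the constants — the $(1+\sqrt{m})$ and $\sqrt{m}$ factors from the decomposition and from $c_m$, and the factor $\sqrt{(\hat{L}(1+\beta)+\alpha\beta\hat{\mu})/(\bar{\mu}\beta)}$ coming out of $C'/\sqrt{\theta^{2}-q^{2}}$ — yields $\gamma_4$ and $b_4$ as stated. I expect the main obstacle to lie in the second and third paragraphs: making precise that $\|e(k)\|_{2}$ is genuinely dominated by the consensus violation $\|\check{\mathbf{x}}(k)\|_{\text{F}}$ (which is exactly where the identity $\mathbf{1}^T\mathbf{w}(k)=m$ and the control of $\bar{x}(k)-\bar{y}(k)$ enter) and calibrating the Young's-inequality parameter against the step-size bound so that the per-iteration contraction stays below $\theta^{2}$ with a gap of order $\eta\bar{\mu}/(\alpha+1)$ while the coefficient $C'$ of $\xi(k)$ remains finite; this balance is what pins down the precise form of $\gamma_4$.
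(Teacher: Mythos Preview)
Your proposal is correct and follows essentially the same route as the paper: the same decomposition $\mathbf{r}(k)=\check{\mathbf{x}}(k)+\mathbf{1}(\bar{x}(k)-\bar{y}(k))^T+\mathbf{1}(\bar{y}(k)-x^*)^T$ (the paper writes the middle term as $\tfrac{1}{m}\mathbf{1}(\mathbf{1}-\mathbf{w}(k))^T\mathbf{x}(k)$, which is identical), the same gradient-tracking identity yielding $\bar{y}(k+1)=\bar{y}(k)-\tfrac{\eta}{m}\sum_i\nabla f_i(x_i(k))$, and the same use of $\mathbf{1}^T\mathbf{w}(k)=m$ to bound $\bar{x}(k)-\bar{y}(k)$ by $\|\check{\mathbf{x}}(k)\|_{\text{F}}$. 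The only difference is that the paper packages the inexact-gradient contraction for $\|\bar{y}-x^*\|$ by citing Lemma~3.12 of \cite{nedic2017achieving} as a black box, whereas you sketch its proof via the one-step Young's-inequality argument; your more self-contained derivation is fine and would recover the same $\gamma_4$ and $b_4$.
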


\subsection{Convergence Result}
We present the main convergence result of Algorithm \ref{alg_private} in Theorem \ref{theorem_Rlinear}.

\begin{theorem}\label{theorem_Rlinear}
Consider the distributed optimization problem (\ref{prob_aggr}) with Assumptions \ref{smooth}, \ref{convex}, \ref{connection} and the weight generation rule in Table \ref{table_weight_rule}. Denote the constants $C_1$ and $C_2$ as $C_1 = 2\sqrt{\frac{(1+\beta)m\hat{L}}{\beta\bar{\mu}} + \frac{\alpha m\hat{\mu}}{\bar{\mu}}}$ and $C_2 = 2 B_0 \kappa \epsilon \| W^{-1} \|_{\max}^{1} (\varepsilon+\epsilon(B_0 - 1)) (1+\sqrt{m}) (1+\alpha^{-1}) (1 + C_1)$. If the fixed step-size $\eta$ satisfies
\begin{equation}
\eta \in \left(0, \min\left\{\frac{(1 + \alpha^{-1}) (1 - \varepsilon)^2}{\bar{\mu} C_2}, \frac{1}{(1+\beta)\bar{L}}\right\}\right),
\end{equation}
then Algorithm \ref{alg_private} converges to the optimal solution $x^*$ at the linear rate $\mathcal{O}(\theta^k)$ with $\theta_0 \le \theta <1$ in which $\theta_0 = \sqrt[B_0]{\frac{\varepsilon + \sqrt{C_2(C_2 - \varepsilon^2 + 1)}}{1+C_2}}$.
\end{theorem}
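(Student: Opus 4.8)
The plan is to read Lemmas~\ref{lemma_vr}--\ref{lemma_rx} as a closed loop of contraction-type estimates in the weighted norm $\|\cdot\|_{\text{F}}^{\theta,K}$ and to close the loop with the small gain theorem of \cite{nedic2017achieving}. Composing the four inequalities cyclically, $\|\mathbf{r}\|_{\text{F}}^{\theta,K}\to\|\mathbf{v}\|_{\text{F}}^{\theta,K}\to\|\check{\mathbf{u}}\|_{\text{F}}^{\theta,K}\to\|\check{\mathbf{x}}\|_{\text{F}}^{\theta,K}\to\|\mathbf{r}\|_{\text{F}}^{\theta,K}$, gives $\|\mathbf{r}\|_{\text{F}}^{\theta,K}\le\gamma_1\gamma_2\gamma_3\gamma_4\,\|\mathbf{r}\|_{\text{F}}^{\theta,K}+(\gamma_2\gamma_3\gamma_4 b_1+\gamma_3\gamma_4 b_2+\gamma_4 b_3+b_4)$, so whenever $\gamma_1\gamma_2\gamma_3\gamma_4<1$ the bracketed constant --- finite and fixed by the initialization of Algorithm~\ref{alg_private} --- bounds $\|\mathbf{r}\|_{\text{F}}^{\theta,K}$ uniformly in $K$. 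Unfolding $\|\mathbf{r}\|_{\text{F}}^{\theta,K}=\max_{1\le k\le K}\theta^{-k}\|\mathbf{x}(k)-\mathbf{x}^*\|_{\text{F}}$ then yields $\|\mathbf{x}(k)-\mathbf{x}^*\|_{\text{F}}\le c\,\theta^{k}$ for a constant $c$, i.e.\ the claimed $\mathcal{O}(\theta^{k})$ rate. So the theorem reduces to (i) proving $\gamma_1\gamma_2\gamma_3\gamma_4<1$, and (ii) verifying that the resulting condition on $\theta$ is consistent with the ranges already demanded by Lemmas~\ref{lemma_uv}--\ref{lemma_rx}: $\varepsilon<\theta^{B_0}<1$, $\theta\ge\sqrt{1-\alpha\eta\bar\mu/(\alpha+1)}$, and $\eta\le 1/((1+\beta)\bar L)$.

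For (i) I would substitute the closed forms of $\gamma_1,\dots,\gamma_4$. Using $\|W^{-1}\|_{\max}^{1}\le(c_0)^{-mB}$, the elementary bounds $(1-\theta^{B_0})/(1-\theta)\le B_0$, $(1-\theta^{B_0-1})/(1-\theta)\le B_0-1$, $1+\theta<2$, and the observation that the factor $(\sqrt m/\theta)\sqrt{(\hat L(1+\beta)+\alpha\beta\hat\mu)/(\bar\mu\beta)}$ inside $\gamma_4$ equals $C_1/(2\theta)$, the $\theta$-prefactors combine (with $\theta<1$) and the surviving constant is exactly the grouping appearing in the definitions of $C_1,C_2$: with $\kappa=\hat L/\bar\mu$, $2\hat L B_0\,\epsilon\,\|W^{-1}\|_{\max}^{1}(\varepsilon+\epsilon(B_0-1))(1+\sqrt m)(1+C_1)=\bar\mu C_2/(1+\alpha^{-1})$. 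This collapses the cyclic gain to $\gamma_1\gamma_2\gamma_3\gamma_4\le\bar\mu C_2\eta/\big((1+\alpha^{-1})(\theta^{B_0}-\varepsilon)^2\big)$, so $\gamma_1\gamma_2\gamma_3\gamma_4<1$ is implied by $(\theta^{B_0}-\varepsilon)^2>\bar\mu C_2\eta/(1+\alpha^{-1})$.

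For (ii), write $q=\bar\mu\eta/(1+\alpha^{-1})$; the small-gain requirement is $q<(\theta^{B_0}-\varepsilon)^2/C_2$, while (using $\alpha/(\alpha+1)=1/(1+\alpha^{-1})$) the Lemma~\ref{lemma_rx} lower bound on $\theta$ is $q\ge 1-\theta^2$. A feasible $q$ exists iff $C_2(1-\theta^2)<(\theta^{B_0}-\varepsilon)^2$; since $\theta^{B_0}\le\theta<1$ implies $1-\theta^2\le 1-\theta^{2B_0}$, it suffices to impose the stronger $(\theta^{B_0}-\varepsilon)^2\ge C_2(1-\theta^{2B_0})$, which with $t=\theta^{B_0}$ is $(1+C_2)t^2-2\varepsilon t+(\varepsilon^2-C_2)\ge 0$; its larger root is $t_0=\big(\varepsilon+\sqrt{C_2(C_2-\varepsilon^2+1)}\big)/(1+C_2)$, i.e.\ precisely the stated $\theta_0^{B_0}$, and $t_0<1$ because squaring $\varepsilon+\sqrt{C_2(C_2-\varepsilon^2+1)}<1+C_2$ reduces to $(1+C_2)(1-\varepsilon)^2>0$. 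Projecting the feasible $(\eta,\theta)$-region onto the $\eta$-axis (let $\theta\uparrow 1$, so $q$ may be any value in $(0,(1-\varepsilon)^2/C_2)$) recovers $\eta<(1+\alpha^{-1})(1-\varepsilon)^2/(\bar\mu C_2)$, which together with the Lemma~\ref{lemma_rx} cap $\eta\le 1/((1+\beta)\bar L)$ is exactly the stated interval for $\eta$; and for any such $\eta$ one can pick $\theta\in[\theta_0,1)$ close enough to $1$ that $1-\theta^2\le q<(\theta^{B_0}-\varepsilon)^2/C_2$ and all Lemma hypotheses hold simultaneously.

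The step I expect to be the main obstacle is this last bookkeeping: showing the several $\theta$-intervals genuinely overlap for every admissible $\eta$, and that $\theta_0$ as written is the tight left endpoint --- this forces the elimination of $\eta$ between the small-gain threshold and the Lemma~\ref{lemma_rx} bound together with the quadratic manipulation above. The rest --- the cyclic composition, the gain-product simplification, and the passage from the weighted sup-norm to $\mathcal{O}(\theta^{k})$ decay through the small gain theorem \cite{nedic2017achieving} --- is routine.
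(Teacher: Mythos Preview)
Your proposal is correct and follows essentially the same route as the paper: cyclic composition of Lemmas~\ref{lemma_vr}--\ref{lemma_rx}, bounding the loop gain via $(1-\theta^{B_0})/(1-\theta)\le B_0$ and $1+\theta<2$, then matching the small-gain ceiling $\eta\le(1+\alpha^{-1})(\theta^{B_0}-\varepsilon)^2/(\bar\mu C_2)$ against the Lemma~\ref{lemma_rx} floor $\eta\ge(1+\alpha^{-1})(1-\theta^2)/\bar\mu$ (strengthened to $1-\theta^{2B_0}$) to locate $\theta_0$ as the larger root of the resulting quadratic in $\theta^{B_0}$. The only detail you glossed over is that the estimate $1+C_1/(2\theta)\le 1+C_1$ inside $\gamma_4$ needs $\theta\ge 1/2$, which the paper imposes explicitly; otherwise your outline matches the paper's argument step for step.
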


\begin{proof}
From Lemmas \ref{lemma_vr}, \ref{lemma_uv}, \ref{lemma_xu}, and \ref{lemma_rx}, we can obtain the gain constants $\gamma_1$, $\gamma_2$, $\gamma_3$, and $\gamma_4$. We first show that there exists parameters $\alpha$, $\beta$, $\theta$, and $\eta$ such that $\gamma_1\gamma_2\gamma_3\gamma_4 < 1$ where $\alpha > 0$, $\beta > 0$, $\varepsilon < \theta^{B_0} < 1$, $\sqrt{1 - \frac{\alpha\eta\bar{\mu}}{1+\alpha}} \le \theta < 1$, and $\eta \le \frac{1}{(1+\beta)\bar{L}}$. 


Due to $0 < \theta < 1$ and $B_{0} \ge 1$, we have 
\begin{equation}\label{equation_theta_B0}
\frac{1-\theta^{B_0}}{1-\theta} \le B_0,
\end{equation}
the proof of which is given in Appendix \ref{proof_theta_B0}. If we choose $0.5\le \theta < 1$ and use (\ref{equation_theta_B0}), it follows that
\begin{equation}
	\begin{aligned}
&\gamma_1\gamma_2\gamma_3\gamma_4 < \\
&\frac{2 \eta B_0 \hat{L}\epsilon\| W^{-1} \|_{\max}^{1} (\varepsilon + \epsilon(B_0-1))(1+\sqrt{m})\left(1 + C_1\right)}{(\theta^{B_0} - \varepsilon)^2},
	\end{aligned}
\end{equation}
where $C_1 = 2\sqrt{\frac{(1+\beta)m\hat{L}}{\beta\bar{\mu}} + \frac{\alpha m\hat{\mu}}{\bar{\mu}}}$. It means that
	\begin{align}\label{equation_eat_right}
	\eta \le \frac{(1 + \alpha^{-1})(\theta^{B_0} - \varepsilon)^2}{\bar{\mu} C_2},
	\end{align}
where $C_2 = 2 B_0 \kappa \epsilon \| W^{-1} \|_{\max}^{1} (\varepsilon+\epsilon(B_0 - 1)) (1+\sqrt{m}) (1+\alpha^{-1}) (1 + C_1)$. In addition, in order to satisfy $\sqrt{1 - \frac{\alpha\eta\bar{\mu}}{1+\alpha}} \le \theta < 1$, it suffices to ensure that $\eta \le (1+\alpha^{-1})/\bar{\mu}$ and
	\begin{align}\label{equation_eat_left}
	\eta \ge \frac{(1 + \alpha^{-1}) (1 - \theta^2)}{\bar{\mu}}.
	\end{align}
Based on (\ref{equation_eat_right}) and (\ref{equation_eat_left}), the key is to verify that the interval of the step-size $\eta$ is not empty. For the convenience of analysis, we shrink the bound in (\ref{equation_eat_left}). That is, we need to make sure that
	\begin{align}\label{equation_interval}
	\eta \in \left[ \frac{(1 + \alpha^{-1}) (1 - \theta^{2 B_0})}{\bar{\mu}}, \frac{(1 + \alpha^{-1})(\theta^{B_0} - \varepsilon)^2}{\bar{\mu} C_2} \right] \not= \emptyset.
	\end{align}
In the following, we show that there exists $\theta$ such that (\ref{equation_interval}) holds. We know that $\varepsilon < \theta^{B_0} < 1$. It can be found that as $\theta$ varies from $\sqrt[B_0]{\varepsilon}$ to 1, the left-bound in (\ref{equation_interval}) monotonically decreases from $\frac{(1 + \alpha^{-1})(1 - \varepsilon^2)}{\bar{\mu}}$ to 0 and the right-bound monotonically increases from 0 to $\frac{(1 + \alpha^{-1}) (1 - \varepsilon)^2}{\bar{\mu} C_2}$. This means that there exists a critical $\theta_0$ such that the left-bound is equal to the right-bound where
\begin{equation}\label{equation_theta_0}
\theta_0 = \sqrt[B_0]{\frac{\varepsilon + \sqrt{C_2(C_2 - \varepsilon^2 + 1)}}{1+C_2}}.
\end{equation}
When $\theta \in [\theta_0, 1)$, (\ref{equation_interval}) holds. Thus, we have $\gamma_1\gamma_2\gamma_3\gamma_4 < 1$. In addition, the step-size $\eta$ can be taken as $\eta \in \left(0, \min\left\{\frac{(1 + \alpha^{-1}) (1 - \varepsilon)^2}{\bar{\mu} C_2}, \frac{1}{(1+\beta)\bar{L}}\right\}\right)$.

Then, combining inequalities (\ref{db_0}), (\ref{ud_0}), (\ref{xu_0}), and (\ref{bx_0}) and based on $\gamma_1\gamma_2\gamma_3\gamma_4 < 1$, it follows that $\| \mathbf{r} \|_{\text{F}}^{\theta, K} \le C_3$ for any $K \ge 1$ where $C_3 = \frac{b_1\gamma_2\gamma_3\gamma_4 + b_2\gamma_3\gamma_4 + b_3\gamma_4 + b_4}{1 - \gamma_1\gamma_2\gamma_3\gamma_4}$
is independent of $K$. When $K \to \infty$, $\| \mathbf{r} \|_{\text{F}}^{\theta, K}$ is still bounded by $C_3$. Thus, we have $\| \mathbf{r}(k) \|_{\text{F}} \le C_3 \theta^k$ for all $k \ge 1$ based on the definition of $\| \cdot \|_{\text{F}}^{\theta, K}$, that is, $\| \mathbf{x}(k) - \mathbf{x}^* \|_{\text{F}} \le C_3 \theta^k, \forall k \ge 1$.
\end{proof}
\begin{remark}
Algorithm \ref{alg_private} involves AES's encryption and decryption operations. These operations do not interfere with the agents' normal updates and thus do not affect the convergence rate and accuracy of Algorithm \ref{alg_private}. In contrast, existing methods may experience accuracy loss due to using the Paillier cryptosystem. This is because the Paillier cryptosystem is designed to handle unsigned integers, requiring fixed-point encoding for processing floating-point numbers.
\end{remark}

\section{Privacy Analysis}\label{section_privacy_analysis}
In this section, we analyze how Algorithm \ref{alg_private} performs in terms of privacy preservation under the honest-but-curious adversary and the external eavesdropper (see Definition \ref{definition_honest_attack} and \ref{definition_external_attack} for definitions of these two attackers). We regard each agent $i$'s gradients as its sensitive information \cite{zhu2019deep, gao2023dynamics}. Each attacker can use both its exploitable information and the updates (\ref{init_upda}) to infer agent $i$'s sensitive information. The exploitable information to an honest-but-curious adversary $j$ at iteration $k$ includes the data locally generated by adversary $j$ (such as $y_j(k)$, $w_j(k)$, $x_j(k)$, $s_j(k)$, $\nabla f_j(x_j(k))$, and $a_{lj}(k), l\in\mathcal{N}_j^{\text{out}}(k) \cup \{j\}$) and the information received from its in-neighbor $i$ (such as $a_{ji}(k)y_i(k)$, $a_{ji}(k)s_i(k)$, $a_{ji}(k)w_i(k), i\in\mathcal{N}_j^{\text{in}}(k)$).
For the external eavesdropper, its exploitable information includes all encrypted information transmitted on the communication links from $k=0$ to $K-1$ (such as $[\![ a_{ji}(k)y_i(k) ]\!]$, $[\![ a_{ji}(k)s_i(k) ]\!]$, $[\![ a_{ji}(k)w_i(k) ]\!], i\in\mathcal{V}, j\in\mathcal{N}_i^{\text{out}}(k), k\ge 0$).
First, we give the definition of privacy preservation in Definition \ref{unobservability}.
\begin{definition}\label{unobservability}
\cite{rikos2023distributed} (Privacy) For algorithms of solving the distributed optimization problem (\ref{prob_aggr}), the privacy of agent $i$ is preserved if neither the honest-but-curious adversary nor the external eavesdropper can infer its gradient value $\nabla f_i(x_i)$ evaluated at any point $x_i$ and determine a finite range $[a, b]$ (where $a < b$ and $a,b \in \mathbb{R}$) within which its gradient value lies.
\end{definition}

Next, we show that the privacy of agent $i$ can be preserved by the following theorem.

\begin{theorem}\label{theorem_privacy_analysis}
Consider the distributed optimization problem (\ref{prob_aggr}) and the definition of privacy in Definition \ref{unobservability}. The following statements hold:

(a) If agent $i$ has at least one legitimate neighbor at $k = 0$ and $k = 1$, Algorithm \ref{alg_private} can preserve the privacy of agent $i$ and its intermediate states $y_i$, $w_i$, and $s_i$ except at $k=1$.

(b) If agent $i$ has at least one legitimate neighbor only at $k = 0$, Algorithm \ref{alg_private} can still preserve the privacy of agent $i$ but its intermediate states $y_i$, $w_i$, and $s_i$ might be leaked.

(c) If agent $i$ has no legitimate neighbors at any iteration, Algorithm \ref{alg_private} cannot preserve the privacy of agent $i$.
\end{theorem}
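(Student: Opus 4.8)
The plan is to analyze what each attacker can reconstruct from its exploitable information, treating the three statements (a), (b), (c) separately and tracing how much the AES layer and the weight-splitting rule in Table~\ref{table_weight_rule} obscure agent $i$'s gradients. For the \emph{external eavesdropper}, the only accessible data are the ciphertexts $[\![ a_{ji}(k)y_i(k) ]\!]$, $[\![ a_{ji}(k)s_i(k) ]\!]$, $[\![ a_{ji}(k)w_i(k) ]\!]$; since AES is a secure symmetric cipher and the shared key is never transmitted in the clear, the eavesdropper obtains no information beyond message lengths, so in all three cases the eavesdropper cannot infer $\nabla f_i(x_i)$ or localize it to a finite interval. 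This reduces every part of the theorem to the honest-but-curious case, and I would state this reduction up front.

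For part (a), I would argue as follows. Fix agent $i$ and suppose $\ell$ is a legitimate out-neighbor of $i$ at $k=0$. The honest-but-curious adversaries (possibly colluding) see, for each $k$, the products $a_{ji}(k)y_i(k)$, $a_{ji}(k)s_i(k)$, $a_{ji}(k)w_i(k)$ only for those $j$ that are adversarial out-neighbors of $i$; the weight $a_{\ell i}(0)$ going to the legitimate neighbor $\ell$ is generated by $i$, never shared, and — crucially — for $k=0$ it is drawn uniformly from all of $\mathbb{R}$. The diagonal entry $a_{ii}(0)=1-\sum_{l\neq i}a_{li}(0)$ therefore also carries this unbounded randomness. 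I would show that, because $w_i(1)$, $y_i(1)$, $s_i(1)$ each depend linearly on these unknown weights (through update \eqref{init_upda}) and $w_i$ is \emph{reset to $1$} at $k=1$, the adversary's view at $k\ge 1$ is consistent with a continuum of values of the triple $(y_i(0),w_i(0),s_i(0))$ and hence of $\nabla f_i(x_i(0))=s_i(0)$; more precisely, for any candidate gradient value there is a choice of the hidden weights reproducing the entire adversarial transcript, so no finite interval can be excluded. The second legitimate neighbor at $k=1$ (with weight in $[c_0,(1-c_0)/d_i^{\text{out}}(1)]$, again unknown to adversaries) is what then hides $y_i(1),w_i(1),s_i(1)$ going forward, giving privacy of the intermediate states for all $k\neq 1$; the explicit exception at $k=1$ arises because $w_i(1)=1$ is public and, if at $k=1$ all of $i$'s out-neighbors were adversarial, the sums $\sum_j a_{ji}(1)=1$-type relations could expose $y_i(1),s_i(1)$ — but the legitimate neighbor at $k=1$ prevents this for all \emph{other} $k$.

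For part (b), the same $k=0$ randomness still makes the \emph{gradient} unrecoverable — the argument for privacy of $\nabla f_i$ only used the legitimate neighbor at $k=0$ and the unbounded weight there — but without a legitimate neighbor at $k=1$ every weight $a_{ji}(1)$ leaving $i$ is known to some adversary, and combined with $w_i(1)=1$ the adversaries can solve for $y_i(1),s_i(1)$ and subsequently track $y_i(k),w_i(k),s_i(k)$; hence the intermediate states may leak while the gradient is safe. For part (c), if $i$ never has a legitimate neighbor then at every $k$ all outgoing weights and weighted messages are in adversarial hands, so the adversaries can divide out the weights to recover $y_i(k),w_i(k),s_i(k),x_i(k)$ exactly, and from $s_i(k+1)-\sum_j a_{ij}(k)s_j(k)=\nabla f_i(x_i(k+1))-\nabla f_i(x_i(k))$ together with $s_i(0)=\nabla f_i(x_i(0))$ they reconstruct $\nabla f_i(x_i(k))$ at every visited point, violating Definition~\ref{unobservability}.

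The main obstacle is part (a): I must make rigorous the claim that the adversary's \emph{entire} transcript (over all $k$, under collusion) is simultaneously consistent with an interval's worth of gradient values. This requires exhibiting, for each hypothesized value of $\nabla f_i$, an admissible completion of \emph{all} hidden quantities — the $k=0$ weights into legitimate neighbors, the corresponding $a_{ii}(0)$, and the resulting trajectory — that matches every observed product $a_{ji}(k)(\cdot)$; the delicate point is that the same hidden weight multiplies $y_i$, $s_i$, and $w_i$, so the degrees of freedom must be counted carefully, and one must confirm the reset $w_i(1)=1$ does not inadvertently pin down $s_i$. I would handle this by writing the adversarial observations as an underdetermined linear system in the unknowns and showing its solution set projects onto all of $\mathbb{R}$ in the $\nabla f_i$ coordinate.
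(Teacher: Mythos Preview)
Your overall strategy matches the paper's: dispatch the external eavesdropper via AES security, then analyze the honest-but-curious adversary in a worst-case configuration and argue that the resulting system of observable equations is underdetermined in the gradient variables. The paper carries out part (a) with the legitimate agent taken as an \emph{in}-neighbor (so the hiding comes from the unknown incoming term $a_{il}(0)s_l(0)$, together with the free choice of $y_i(0)$), whereas you work with a legitimate \emph{out}-neighbor and push the randomness through $a_{ii}(0)=1-\sum_{l}a_{li}(0)$; the paper explicitly notes the two cases are analogous, so this is a legitimate variant rather than a different proof.

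There is, however, a concrete slip in your part (c). You assert that because every neighbor is adversarial the coalition can ``divide out the weights'' and recover $y_i(k),w_i(k),s_i(k)$ for all $k$, in particular $s_i(0)=\nabla f_i(x_i(0))$. That step fails at $k=0$: the adversaries see only the products $a_{ji}(0)w_i(0)$, and since $w_i(0)\in\mathbb{R}$ is an \emph{arbitrary} private initialization (and $w_i(1)$ is reset to $1$, so the update at $k=0$ gives no equation for $w_i(0)$), neither $a_{ji}(0)$ nor $s_i(0)$ can be isolated. The paper's mechanism is different and is what actually works: using column stochasticity, $a_{ii}(0)s_i(0)-s_i(0)=-\sum_{j\in\mathcal{N}_i^{\text{out}}(0)}a_{ji}(0)s_i(0)=-\sum_j J_s^j(0)$, so in the update $s_i(1)=a_{ii}(0)s_i(0)+\nabla f_i(x_i(1))-\nabla f_i(x_i(0))$ the unknown $s_i(0)$ cancels, yielding $\nabla f_i(x_i(1))=s_i(1)+\sum_j J_s^j(0)-\sum_{l\in\mathcal{N}_i^{\text{in}}(0)}a_{il}(0)s_l(0)$, all of whose right-hand terms are known to the colluding adversaries. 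From there the recursion you wrote does recover $\nabla f_i(x_i(k))$ for $k\ge 1$. A smaller point: your explanation of why $k=1$ is excepted in (a) is not quite the right one---it is not a ``$\sum_j a_{ji}(1)=1$'' relation but simply that $w_i(1)=1$ lets \emph{any single} adversarial out-neighbor read off $a_{ji}(1)=J_w(1)$ and hence $y_i(1),s_i(1)$.
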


\begin{proof}
(a) We first analyze the inability of the honest-but-curious adversary $j$ to infer the agent $i$'s private information. Here, we consider the worst case that adversary $j$ is always an out-neighbor of agent $i$ and is the only neighbor at $k > 1$, i.e., $j \in \mathcal{N}_i^{\text{out}}(k)$ for $k \ge 0$ and $\mathcal{N}_i^{\text{out}}(k) \cup \mathcal{N}_i^{\text{in}}(k) = \{ j \}$ for $k > 1$. In this case, adversary $j$ can receive the information from agent $i$ at all iterations and obtain the exact updates of agent $i$ at $k > 1$. The information received by adversary $j$ from agent $i$ is as follows:
	\begin{align*}
	&J_y(0) = a_{ji}(0) y_i(0), \\
	&J_s(0) = a_{ji}(0) s_i(0), \\
	&J_w(0) = a_{ji}(0) w_i(0), \\
	&\quad\vdots \\
	&J_y(K-1) = a_{ji}(K-1) y_i(K-1), \\
	&J_s(K-1) = a_{ji}(K-1) s_i(K-1), \\
	&J_w(K-1) = a_{ji}(K-1) w_i(K-1).
	\end{align*}
Among these, although $s_i(0)$ is directly related to the gradient, i.e, $s_i(0) = \nabla f_i(x_i(0))$, it is impossible for adversary $j$ to infer $s_i(0)$ since $a_{ji}(0)$ is randomly selected from $\mathbb{R}$.

Moreover, we show that by combining the above information with the updates (\ref{init_upda}), the honest-but-curious adversary $j$ still cannot infer the agent $i$'s private information. In the updates (\ref{init_upda}), $w_i(k) = 1$ when $k = 1$. Then, adversary $j$ can use $J_y(1), J_s(1), J_w(1)$ to calculate $y_i(1)$ and $s_i(1)$. Given that agent $i$ has at least one legitimate neighbor at $k = 0$ and $k = 1$, there must exist an agent $l \in \mathcal{N}_i^{\text{out}}(k) \cup \mathcal{N}_i^{\text{in}}(k), k = 0, 1$ such that $l \not= j$ holds. We consider the case of $l \in \mathcal{N}_i^{\text{in}}(k), k = 0, 1$, and other cases are similar. At $k = 0$, $s_i(0) = \nabla f_i(x_i(0))$, then, the updates of agent $i$ are
	\begin{align*}
	&y_i(1) = y_i(0) - \eta \nabla f_i(x_i(0)) - J_y(0) + \eta J_s(0)\\ 
	&\qquad\quad + a_{il}(0) \left(y_l(0) - \eta s_l(0) \right), \\
	&s_i(1) = - J_s(0) + a_{il}(0)s_l(0) + \nabla f_i(x_i(1)).
	\end{align*}
It can be found that although $y_i(1)$ and $s_i(1)$ are known, the gradients $\nabla f_i(x_i(0))$ and $\nabla f_i(x_i(1))$ of agent $i$ are not inferable due to the fact that $y_i(0)$ and $a_{il}(0)$ are randomly selected from $\mathbb{R}^d$ and $\mathbb{R}$ respectively, and $y_l(0), s_l(0)$ are unknown variables.

At $k = 1$, $w_i(k) = 1$ and $w_l(k) = 1$, then, the updates of agent $i$ are
	\begin{align*}
	&y_i(2) = y_i(1) - \eta s_i(1) - J_y(1) + \eta J_s(1) \\
	&\qquad\quad + a_{il}(1) \left(y_l(1) - \eta s_l(1) \right), \\
	&w_i(2) = 1 - J_w(1) + a_{il}(1), \\
	&s_i(2) = s_i(1) - J_s(1) + a_{il}(1)s_l(1) + \nabla f_i(x_i(2)) \\
	&\qquad\quad - \nabla f_i(x_i(1)).
	\end{align*}
Following a similar line of reasoning of the case of $k=0$, the gradients $\nabla f_i(x_i(1))$ and $\nabla f_i(x_i(2))$ of agent $i$ are still not inferable. In addition, $y_i(2), w_i(2), s_i(2)$ are also not inferable.

At $k>1$, $j \in \mathcal{N}_i^{\text{out}}(k)$ and $\mathcal{N}_i^{\text{out}}(k) \cup \mathcal{N}_i^{\text{in}}(k) = \{ j \}$. Here, we consider the case of $\mathcal{N}_i^{\text{out}}(k) = \{j\}, \mathcal{N}_i^{\text{in}}(k) = \emptyset$ for $k>1$, and other cases are similar. Then, the updates of agent $i$ are
	\begin{align*}
	&y_i(k+1) = y_i(k) - \eta s_i(k) - J_y(k) + \eta J_s(k), \\
	&w_i(k+1) = w_i(k) - J_w(k), \\
	&s_i(k+1) = s_i(k) - J_s(k) + \nabla f_i(x_i(k+1)) - \nabla f_i(x_i(k)).
	\end{align*}
It follows that at each iteration $k$, there are $2d+1$ equations but $6d+2$ unknown variables. As it is consistent, the above system of equations must have infinitely many solutions. Thus, the gradients $\nabla f_i(x_i(k)), \nabla f_i(x_i(k+1))$ and states $y_i(k), w_i(k), s_i(k)$ are not inferable for adversary $j$. Even if $s_i(k+1)=\mathbf{0}$ when Algorithm \ref{alg_private} converges, the gradients of agent $i$ are still not inferable for similar reasoning as above.

In summary, if agent $i$ has at least one legitimate neighbor at $k = 0$ and $k = 1$, the honest-but-curious adversary $j$ cannot infer the agent $i$'s private information and its intermediate states $y_i$, $w_i$, and $s_i$ except at $k=1$. Furthermore, there are infinitely many sets of values for the agent $i$'s private variables in all the above inference analyses. This means that the honest-but-curious adversary $j$ also cannot determine a finite range $[a, b]$ (where $a < b$ and $a,b \in \mathbb{R}$) in which the agent $i$’s gradient value lies.

Next, we analyze the inability of the external eavesdropper to infer the agent $i$'s private information, which also applies to parts (b) and (c) of Theorem \ref{theorem_privacy_analysis}. The exploitable information of the external eavesdropper includes the information that agent $i$ interacts with all neighbors, however, the information is encrypted by AES. Even if combined with the updates (\ref{init_upda}), the external eavesdropper is still unable to obtain more useful information. Thus, the external eavesdropper can neither infer and determine the agent $i$'s private information.

(b) We consider the worst case that the honest-but-curious adversary $j$ is always an out-neighbor of agent $i$ and is the only neighbor at $k \ge 1$, i.e., $j\in\mathcal{N}_i^{\text{out}}(k)$ for $k \ge 0$ and $\mathcal{N}_i^{\text{out}}(k) \cup \mathcal{N}_i^{\text{in}}(k) = \{ j \}$ for $k \ge 1$. We here consider the case of $\mathcal{N}_i^{\text{out}}(k) = \{j\}, \mathcal{N}_i^{\text{in}}(k) = \emptyset$ for $k \ge 1$, and other cases are similar. Then, the update of $w_i(k)$ of agent $i$ for $k \ge 1$ is
	\begin{equation}\label{corollary_equation_w}
	w_i(k+1) = w_i(k) - J_w(k).
	\end{equation}
Since $w_i(k) = 1$ when $k=1$, adversary $j$ can infer $w_i(k), k\ge 1$ based on $J_w(k), k\ge 1$ and (\ref{corollary_equation_w}). Then, according to the exploitable information $J_y(k), J_s(k), J_w(k), k\ge 1$, adversary $j$ can also infer $y_i(k), s_i(k), k\ge 1$. To infer the agent $i$'s private information, adversary $j$ can establish the following equations:
	\begin{equation}\label{corollary_equation_sk}
	\begin{aligned}
	&s_i(2) = s_i(1) - J_s(1) + \nabla f_i(x_i(2)) - \nabla f_i(x_i(1)),\\
	&\quad\vdots \\
	&s_i(K) = s_i(K-1) - J_s(K-1) + \nabla f_i(x_i(K)) \\
	&\qquad\qquad - \nabla f_i(x_i(K-1)).
	\end{aligned}
	\end{equation}
There are $(K-1)d$ equations but $K d$ unknown variables although $s_i(k)$ and $J_s(k), k\ge1$ are known. Thus, the honest-but-curious adversary $j$ cannot infer the private information of agent $i$ nor determine a finite range in which the agent $i$'s private information lies.

(c) We consider the worst case that the honest-but-curious adversary $j$ is always an out-neighbor of agent $i$ and is always the only neighbor, i.e., $j\in\mathcal{N}_i^{\text{out}}(k)$ and $\mathcal{N}_i^{\text{out}}(k) \cup \mathcal{N}_i^{\text{in}}(k) = \{ j \}$ for $k \ge 0$. Then, the update of $s_i(k)$ of agent $i$ at $k=0$ is
	\begin{equation}\label{corollary_equation_s0}
	s_i(1) = - J_s(0) + \nabla f_i(x_i(1)).
	\end{equation}
Adversary $j$ can infer $\nabla f_i(x_i(1))$ based on the known $s_i(1)$ and (\ref{corollary_equation_s0}), and further infer $\nabla f_i(x_i(k)), k>1$ based on (\ref{corollary_equation_sk}).
\end{proof}

\begin{remark}
In the analysis of Theorem \ref{theorem_privacy_analysis}, it is conservative to consider that the honest-but-curious adversary $j$ is always an out-neighbor of agent $i$ and even the only neighbor. This is because Algorithm \ref{alg_private} applies to time-varying directed graphs. If the communication between agent $i$ and adversary $j$ is time-varying, the information available to adversary $j$ about agent $i$ is less and intermittent, which adds more difficulty for adversary $j$ to infer the agent $i$'s private information. Furthermore, the privacy preservation of Algorithm \ref{alg_private} requires that agent $i$ has at least one legitimate neighbor only at the initial iteration (see part (b) of Theorem \ref{theorem_privacy_analysis}). This condition is much weaker than that in \cite{gao2023dynamics, zhang2019admm}, which requires that agent $i$ has at least one legitimate neighbor at all iterations. Combining parts (a) and (b) of Theorem \ref{theorem_privacy_analysis}, we know that Algorithm \ref{alg_private} can preserve the intermediate states $y_{i}$, $w_{i}$, $s_{i}$ of agent $i$ if it has at least one legitimate neighbor at both $k = 0$ and $k = 1$. That is to say, if these intermediate states need to be preserved, it is sufficient for agent $i$ to have at least one legitimate neighbor in the first two iterations.
\end{remark}



\begin{remark}
Suppose there is a set of honest-but-curious adversaries $\mathcal{H}$, all of whom are the agent $i$'s neighbors and can share information with each other, colluding to infer the agent $i$'s private information. Even so, the privacy of agent $i$ is still not inferable if there exists at least one agent belongs to $\mathcal{N}_i^{\text{out}}(0)\cup\mathcal{N}_i^{\text{in}}(0)$ but not to $\mathcal{H}$. This is because the analysis of Theorem \ref{theorem_privacy_analysis} considers the worst-case scenario, which includes the case that a set of honest-but-curious adversaries are in collusion.
\end{remark}

\begin{remark}\label{remark_pushdiging}
The weight generation rule in Table \ref{table_weight_rule} plays an important role in the privacy analysis of Algorithm \ref{alg_private}. For this reason, existing distributed optimization algorithms over directed graphs such as Push-DIGing \cite{nedic2017achieving} and ADD-OPT \cite{xi2018addopt} cannot protect the privacy of agents. We take ADD-OPT as an example for illustration. In ADD-OPT algorithm, $a_{li} = \frac{1}{d_i^{\text{out}}+1}$ if $l\in\mathcal{N}_i^{\text{out}}$ otherwise $a_{li} = 0$ and $w_i(0) = 1$ for all agents $i$. Once the honest-but-curious adversary $j$ knows the out-degree of agent $i$, it can infer $y_i(k), s_i(k), w_i(k)$ for $k\ge 0$ through $J_y(k), J_s(k), J_w(k)$. Then, the honest-but-curious adversary $j$ can infer $\nabla f_i(x_i(k)), \forall k$ based on $s_i(0) = \nabla f_i(x_i(0))$ and agent $i$'s update of $s_i(k)$. Even if all weights are random and time-varying and agent $i$ always has at least one legitimate neighbor, the honest-but-curious adversary $j$ can still infer $\nabla f_i(x_i(0))$ based on $w_i(0) = 1$ and $\nabla f_i(x_i(0)) = s_i(0)$.
\end{remark}

\begin{remark}\label{remark_no_crytptosystem}
If there is no cryptosystem, the external eavesdropper can intercept all information about agent $i$ on the communication links. Then, the external eavesdropper can infer the agent $i$'s private information according to the updates (\ref{init_upda}), following a similar scheme in the proof of Theorem \ref{theorem_privacy_analysis}(c).
\end{remark}
\section{Numerical Simulations}\label{section_numerical_simulation}
In this section, we evaluate the performance of the proposed cryptography-based privacy-preserving distributed algorithm using the canonical sensor fusion problem \cite{xu2018convergence}. In the problem, all $m$ sensors collectively estimate an unknown parameter $x \in \mathbb{R}^d$ to minimize the sum of their local loss functions, which is formulated as
\begin{equation}\label{problem_sensor}
\min_{x\in\mathbb{R}^d} = \sum_{i=1}^{m}\left( \| z_i - M_i x \|^2 + \omega_i \| x \|^2 \right),
\end{equation}
where $M_i \in \mathbb{R}^{s\times d}$ is the measurement matrix, $z_i = M_i x + \xi_i \in \mathbb{R}^{s}$ represents the observation from sensor $i$ with noise $\xi_i$, and $\omega_i$ is the regularization parameter. The measurement matrix $M_i$ is drawn uniformly from $[0, 10]$. The noise $\xi_i$ follows the i.i.d. Gaussian noise with the mean of zero and unit variance. We generate a predefined parameter $\tilde{x}$ that is uniformly drawn from $[0,1]$, and then construct the observation $z_i$ using $z_i = M_i \tilde{x} + \xi_i$. The regularization parameters of all sensors are consistently set to 0.01. We use the PyCryptodome library \cite{PyCryptodome} to implement AES and choose the key size as 256 bits. All numerical simulations are conducted in the Python 3.8.13 environment with an Intel(R) Xeon(R) Bronze 3204 CPU @ 1.90GHz 1.90 GHz, 32GB RAM.

\begin{figure}[!h]
  \centering
  \includegraphics[]{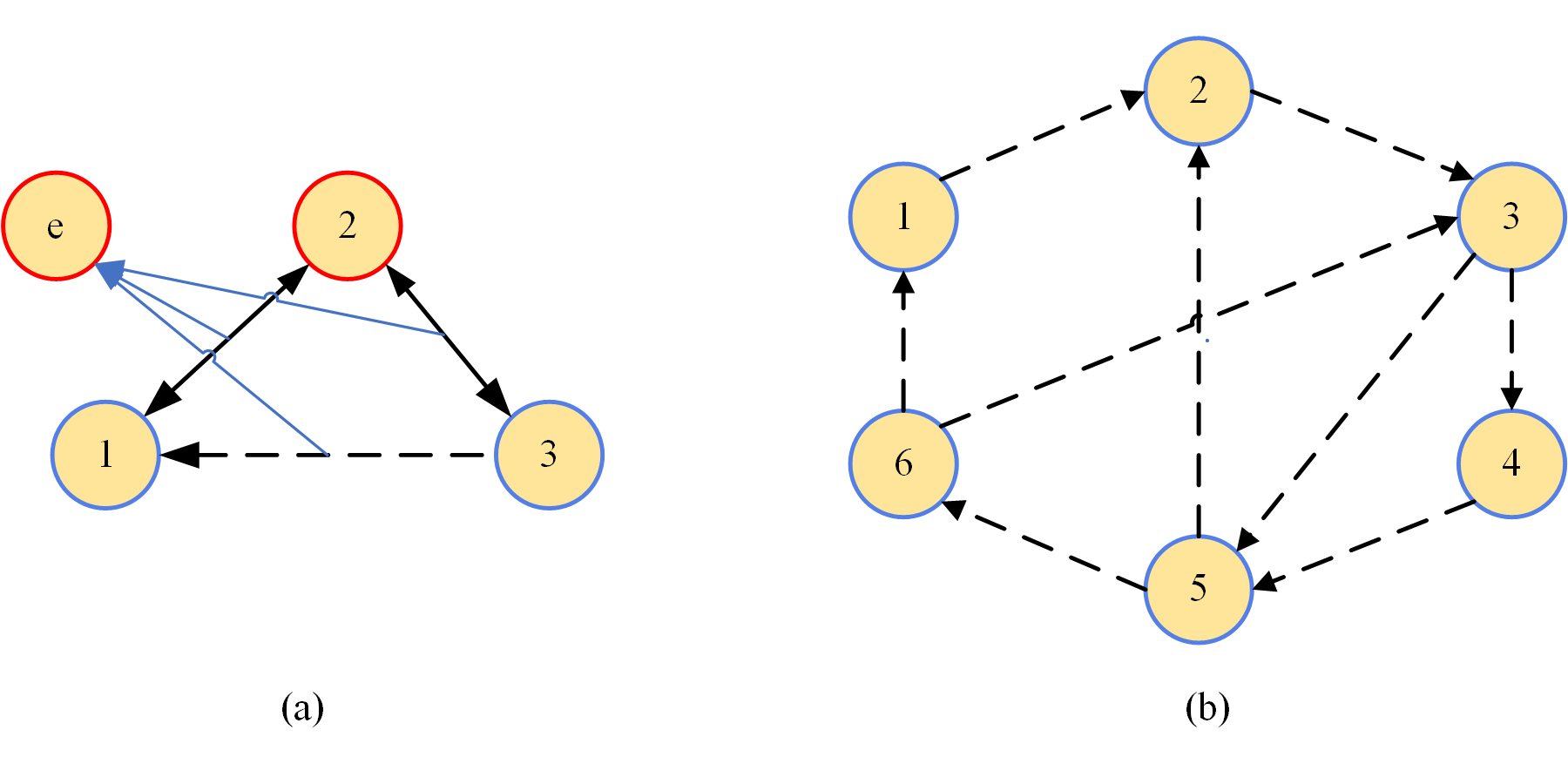}
  \caption{The two communication graphs. A dashed line indicates that the link is not necessarily activated.}
  \label{fig_graph}
\end{figure}

\begin{figure}[!h]
  \centering
  \includegraphics[width = \linewidth]{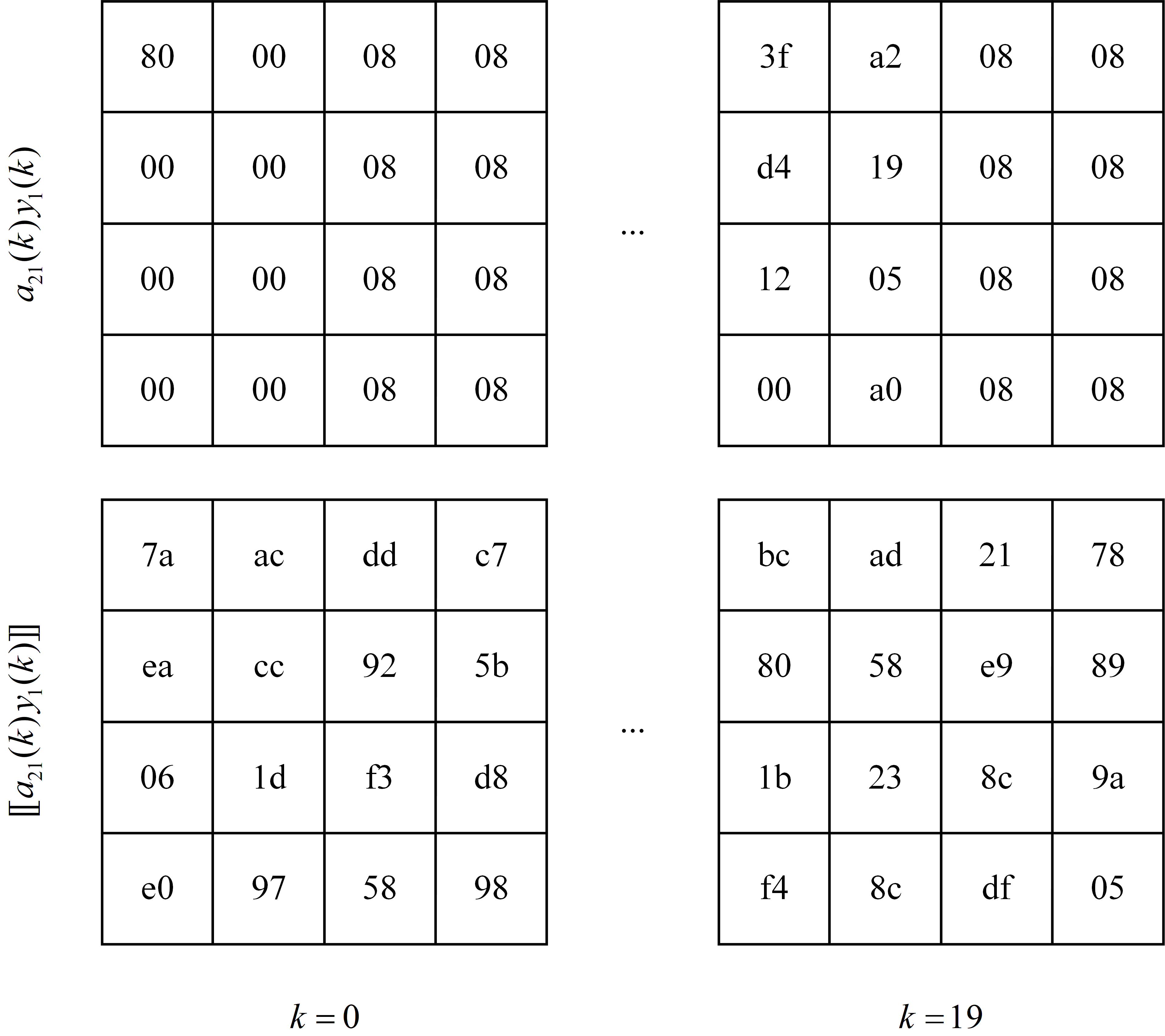}
  \caption{Comparison of the hexadecimal of the true information $a_{21}(k)y_1(k)$ with that of the encrypted information $[\![ a_{21}(k)y_1(k) ]\!]$ stolen by the external eavesdropper.}
  \label{fig_e_privacy}
\end{figure}

\begin{figure}[!h]
  \centering
  \includegraphics[width = \linewidth]{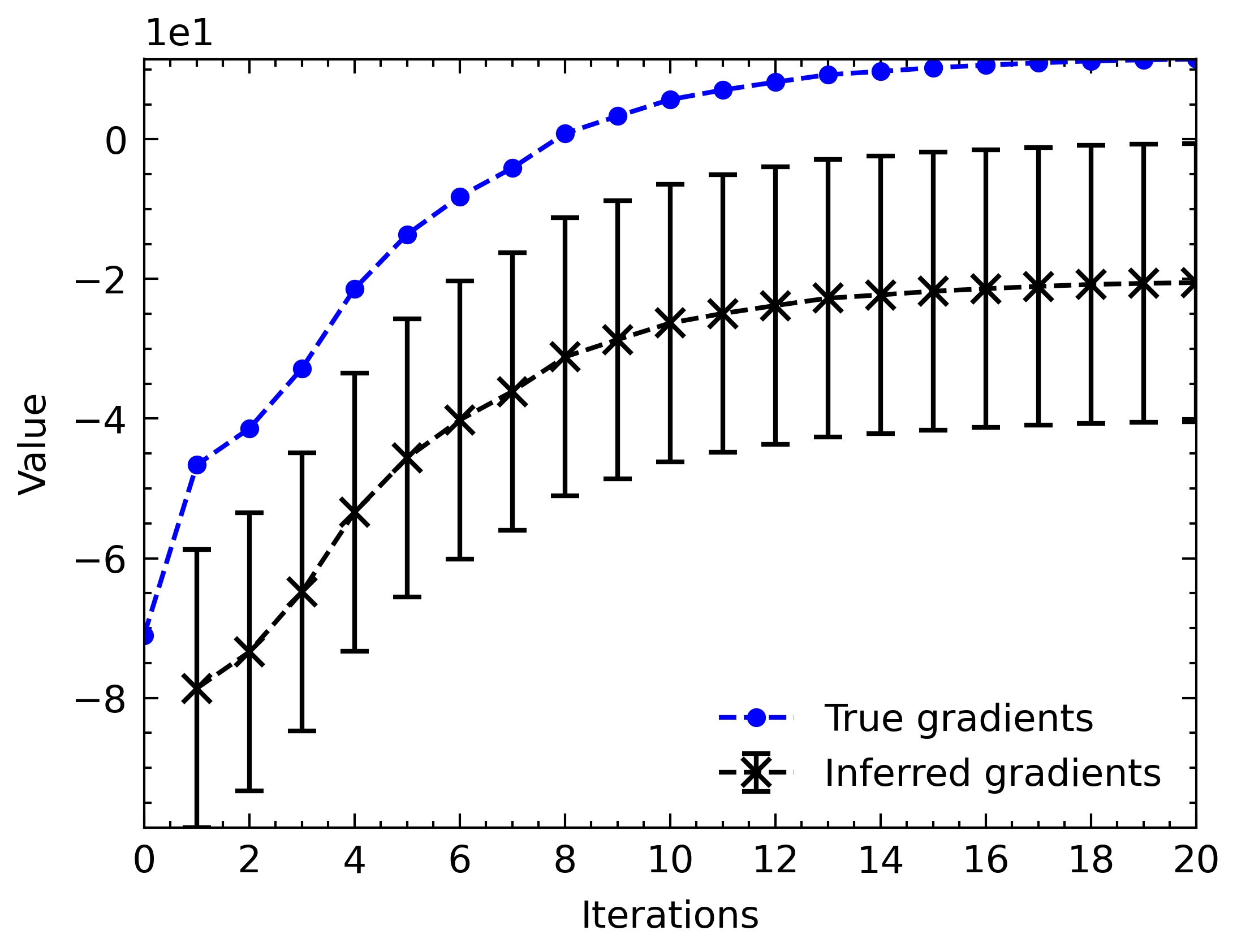}
  \caption{Comparison of agent 1's true gradients with inferred gradients by the honest-but-curious adversary.}
  \label{fig_privacy}
\end{figure}

\begin{figure}[!h]
  \centering
  \includegraphics[width = \linewidth]{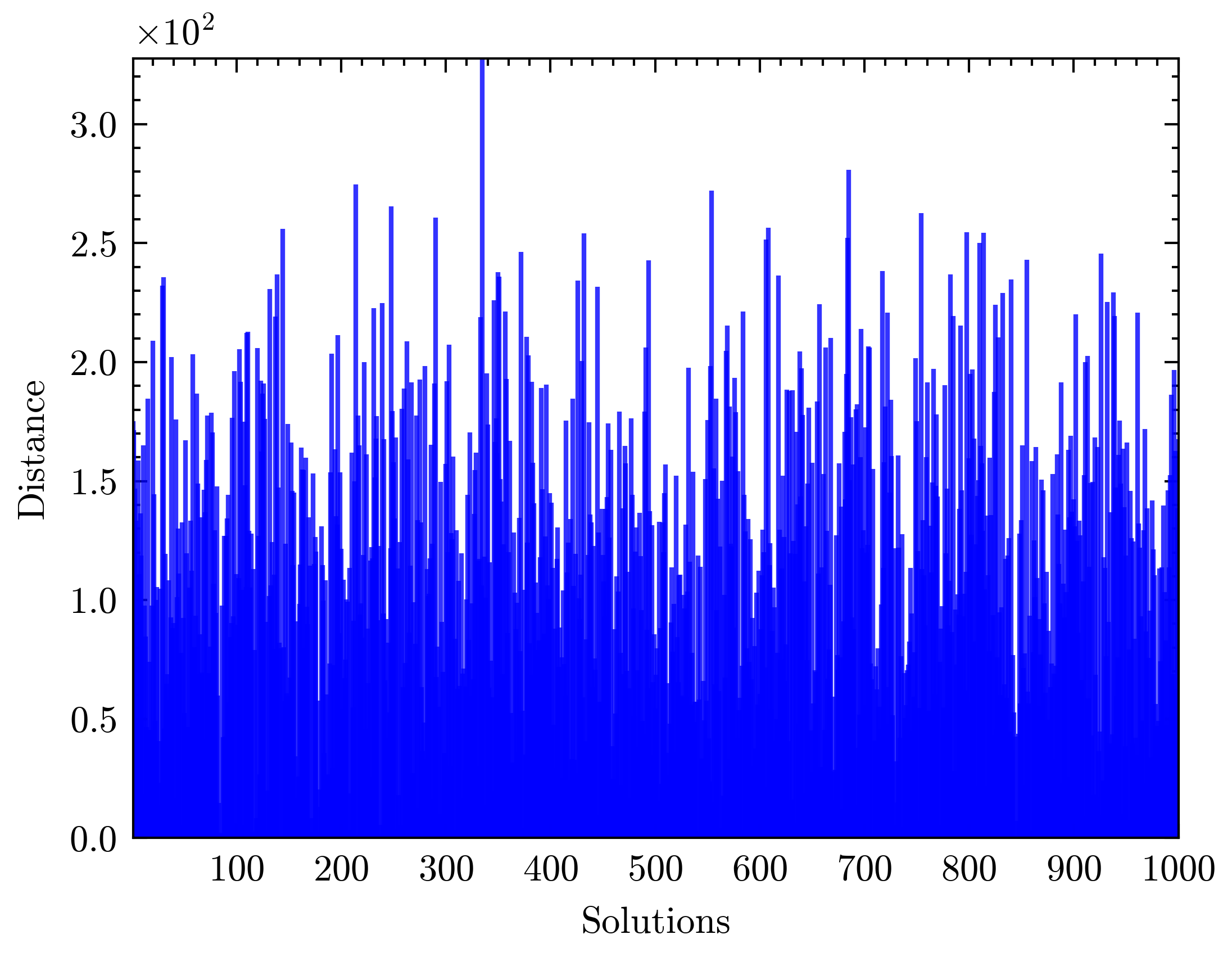}
  \caption{The distance of the inferred gradients by the honest-but-curious adversary to the true gradients.}
  \label{fig_distance}
\end{figure}
\subsection{Evaluation of Privacy of Algorithm \ref{alg_private}}\label{subsection_privacy}
In this subsection, we evaluate the ability of the proposed privacy-preserving algorithm to preserve exact gradients of agents against the honest-but-curious adversary and the external eavesdropper. We consider the worst case as shown in Fig. \ref{fig_graph}(a). There are three agents 1, 2, 3, and an external eavesdropper $e$ of which agent 2 is the honest-but-curious adversary. Agent 2 is always the neighbor of agent 1, and agent 3 is agent 1's in-neighbor only at the initial iteration, which is consistent with the setup of part (b) of Theorem \ref{theorem_privacy_analysis}. The external eavesdropper $e$ always has access to all information on the communication links. Both agent 2 and external eavesdropper $e$ try to infer the exact gradients of agent 1. For demonstration purposes, we set $s=1$ and $d=1$ in problem (\ref{problem_sensor}) (but other values of $s$ and $d$ can also obtain similar results). In addition, we set the step-size of Algorithm \ref{alg_private} as $5\times 10^{-3}$. Fig. \ref{fig_e_privacy} shows the hexadecimal of the true information $a_{21}(k)y_1(k)$ sent from agent 1 to agent 2, as well as the hexadecimal of the encrypted information $[\![ a_{21}(k)y_1(k) ]\!]$ stolen by external eavesdropper $e$. It can be seen that the true information is disrupted and unrecognizable after being encrypted. Thus, the encrypted information does not reveal any privacy of agent 1 to the external eavesdropper $e$. The blue dots in Fig. \ref{fig_privacy} give exact gradients of agent 1 at $0\le k\le K$. The honest-but-curious agent 2 can try to infer agent 1's exact gradients at $1\le k \le K$ using the system of equations (\ref{corollary_equation_sk}). Theoretically, there are infinitely many solutions to the system of equations. The black crosses in Fig. \ref{fig_privacy} display the mean and variance of 1000 sets of random and special solutions. Clearly, it is almost impossible for agent 2 to infer the exact gradients of agent 1 without additional side information. Note that the inferred gradients by agent 2 at $1\le k\le K$ have the same trend as the exact gradients. This is because when $d = 1$, the system of equations (\ref{corollary_equation_sk}) has $K-1$ equations and $K$ unknown variables. However, once the worst case is no longer satisfied (agent 1 has other neighbors at $k \ge 1$ or agent 2 is not an out-neighbor of agent 1 at a certain iteration), the same trend will not appear. Part (a) of Theorem \ref{theorem_privacy_analysis} has already confirmed this. In addition, we estimate how close the inferred gradients are to the exact gradients by using the relative Euclidean distance, that is, $\sum_{k=1}^{K} \|\nabla \tilde{f}_{1}(x_{1}(k)) - \nabla f_{1}(x_{1}(k))\|_{2}/\|\nabla f_{1}(x_{1}(k))\|_{2}$, where $\nabla \tilde{f}_{1}(x_{1}(k))$ is denoted as the inferred gradient by agent 2 about agent 1 and $\nabla f_{1}(x_{1}(k))$ is the exact gradient of agent 1 at iteration $k$. Fig. \ref{fig_distance} illustrates the distance between those 1000 sets of inferred gradients and the exact gradients. It can be observed that the inferred gradients are far from the exact gradients. In fact, due to the existence of an infinite number of solutions, the distance between the inferred gradients and the exact gradients can be arbitrarily large. Thus, it is also difficult for the honest-but-curious adversary to obtain an approximate estimation of the exact gradients.


\begin{figure}[!h]
  \centering
  \includegraphics[width = \linewidth]{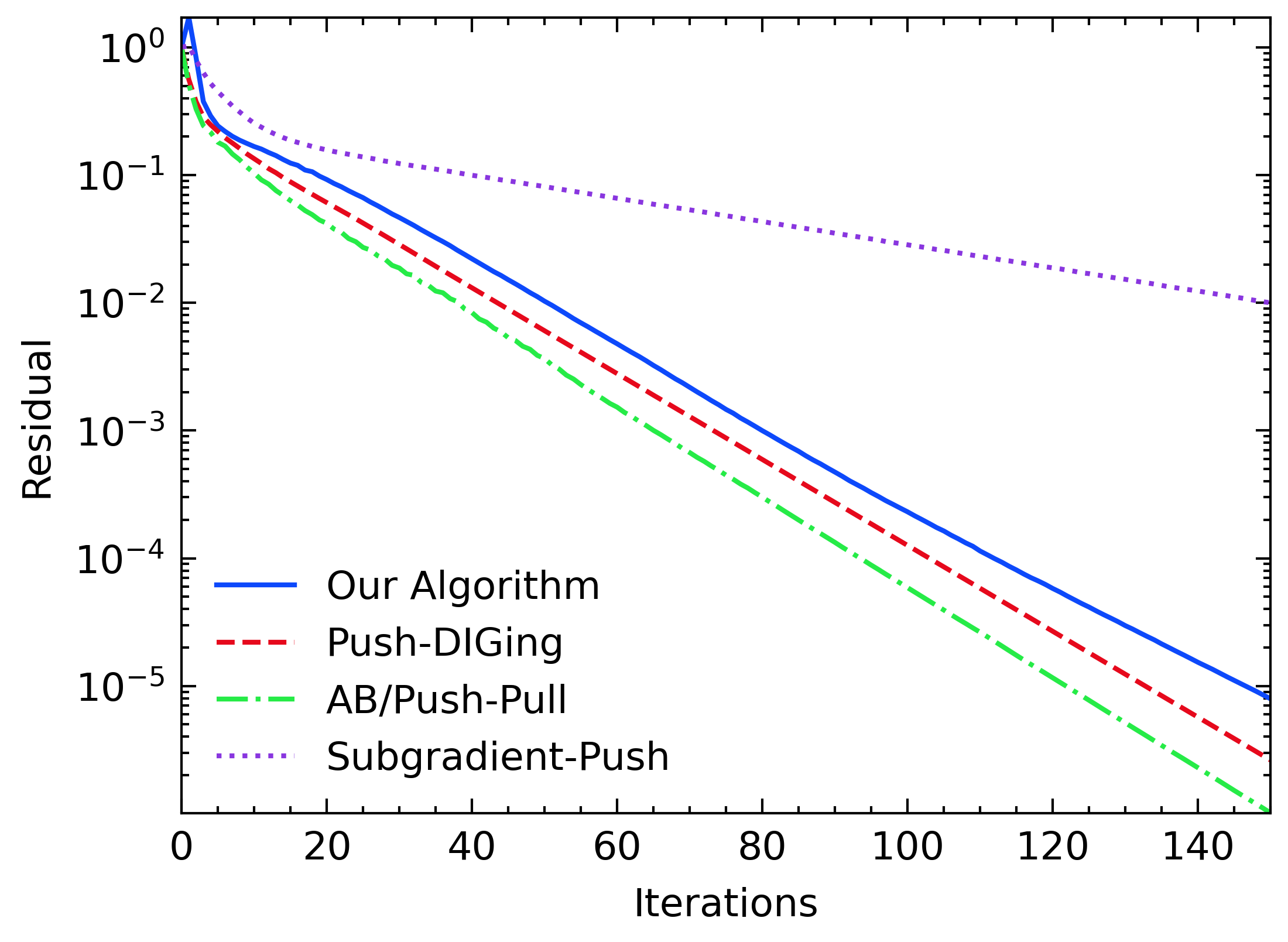}
  \caption{Comparison of the convergence of Algorithm \ref{alg_private} with Push-DIGing \cite{nedic2017achieving}, AB/Push-Pull \cite{nedic2022abpushpull}, and Subgradient-Push \cite{nedic2015distributed} over time-varying directed graphs.}
  \label{fig_tvgraphs}
\end{figure}
\subsection{Evaluation of Convergence of Algorithm \ref{alg_private}}\label{subsection_convergence}
In this subsection, we evaluate the convergence performance of the proposed privacy-preserving algorithm over time-varying directed graphs. We consider a network with six agents as shown in Fig. \ref{fig_graph}(b), and set the dimensions $s = 3$ and $d=2$ of the problem (\ref{problem_sensor}). To construct time-varying directed graphs, we let each directed edge in Fig. \ref{fig_graph}(b) be activated with 90\% probability before each iteration. These activated directed edges form the communication graph of the current iteration. We run our algorithm 100 times with the step-size of $1.1 \times 10^{-3}$ and calculate the average of the relative residual denoted as ${\| \mathbf{x}(k) - \mathbf{x}^*\|^2}/{\| \mathbf{x}(0) - \mathbf{x}^* \|^2}$ per iteration, which is shown by the solid blue line in Fig. \ref{fig_tvgraphs}. For comparison, we also implement the algorithms Push-DIGing \cite{nedic2017achieving} with step-size $1.2\times 10^{-3}$, AB/Push-Pull \cite{nedic2022abpushpull} with step-size $1.2\times 10^{-3}$, and Subgradient-Push \cite{nedic2015distributed} with step-size $1/(k+3\times10^{3})$, all of which are applicable to time-varying directed graphs. The first two algorithms have a linear convergence rate, and the last one has a sub-linear convergence rate. The results are shown by the red, green, and purple lines in Fig. \ref{fig_tvgraphs}, respectively. It can be seen from Fig. \ref{fig_tvgraphs} that the proposed privacy-preserving algorithm has a comparable linear convergence rate over time-varying directed graphs.

\subsection{Comparison with Other Cryptography-Based Privacy-Preserving Distributed Algorithms}\label{subsection_simulation_comparison}
In this subsection, we compare our privacy-preserving algorithm with existing privacy-preserving distributed algorithms that are also based on cryptosystems in terms of both the convergence as well as the running time. The algorithms being used for comparison are the ADMM-based privacy-preserving algorithm proposed in \cite{zhang2019admm} (referred to as PP-ADMM) and the distributed projected subgradient descent-based privacy-preserving algorithm proposed in \cite{zhang2019enabling} (referred to as PP-DPGD). This is because, among the existing cryptography-based privacy-preserving methods, only these two methods address the same optimization problem as in this paper. We use the python-paillier library \cite{PythonPaillier} to implement the Paillier cryptosystem in both algorithms, with a key size of 3072 bits. Both algorithms are only applicable to a fixed undirected graph. Thus, for PP-ADMM and PP-DPGD, we employ a fixed undirected graph by maintaining the edges in Fig. \ref{fig_graph}(b) and removing their directions. For the algorithm proposed in this paper, we continue to use the time-varying directed graph depicted in Fig. \ref{fig_graph}(b). We set the dimensions $s = 3$ and $d = 2$ in problem (\ref{problem_sensor}). We set the step-size of our algorithm as $1.1\times 10^{-3}$, the step-size of PP-DPGD as $1/(1\times10^{4}+k)$, and the parameters of PP-ADMM as $\bar{b} = 0.5, \gamma = 0.1$. In the convergence case, Fig. \ref{fig_convergence_crypesystem} shows the average relative residuals of three algorithms over 100 trials. It is clear that our proposed privacy-preserving algorithm converges much faster than the other two algorithms. This is because both PP-ADMM and PP-DPGD have only a sub-linear convergence rate.

\begin{figure}[!h]
  \centering
  \includegraphics[width = \linewidth]{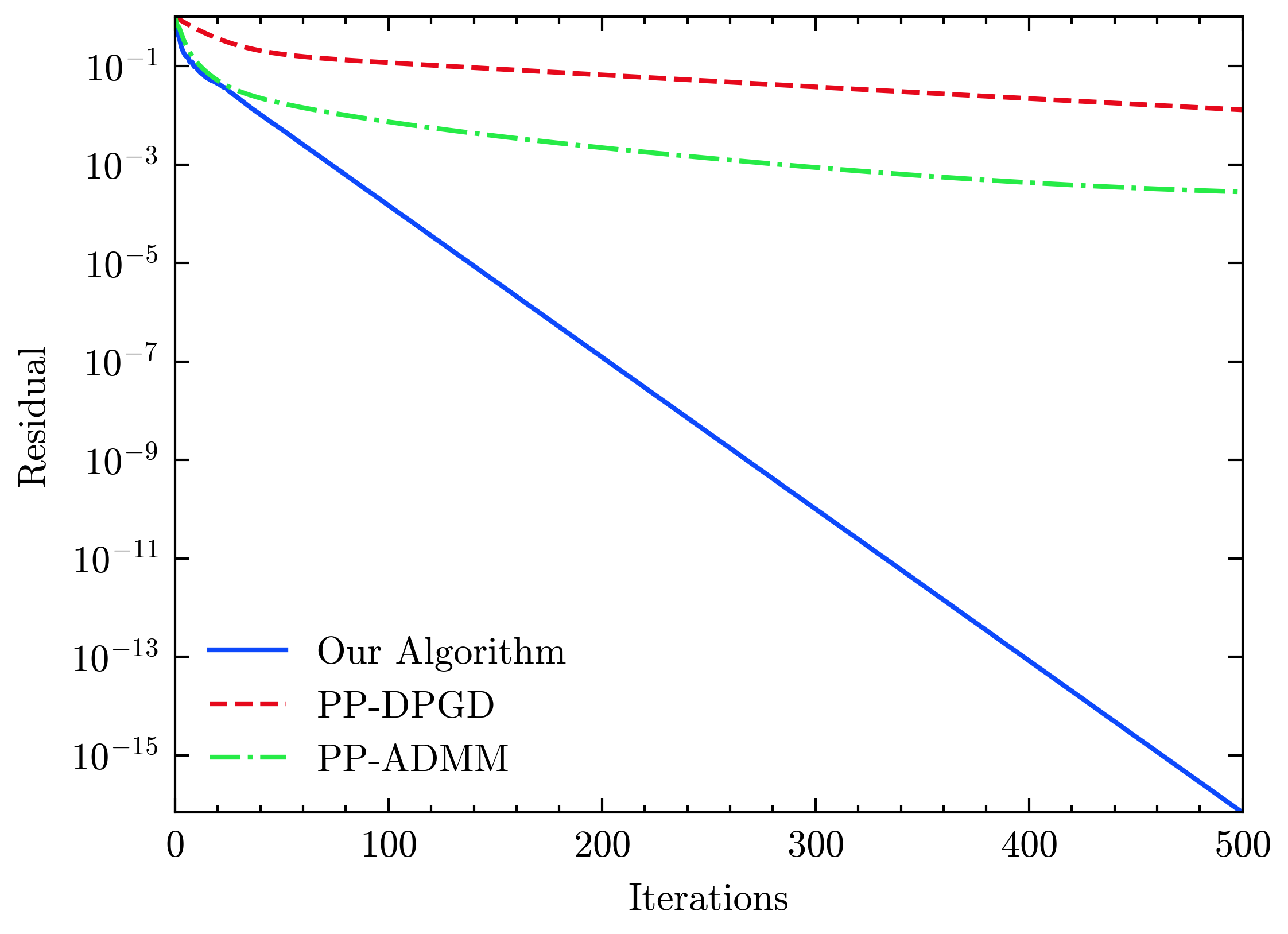}
  \caption{Comparison of the convergence of Algorithm \ref{alg_private} with PP-DPGD \cite{zhang2019enabling} and PP-ADMM \cite{zhang2019admm}.}
  \label{fig_convergence_crypesystem}
\end{figure}

\begin{figure}[!h]
  \centering
  \includegraphics[width = \linewidth]{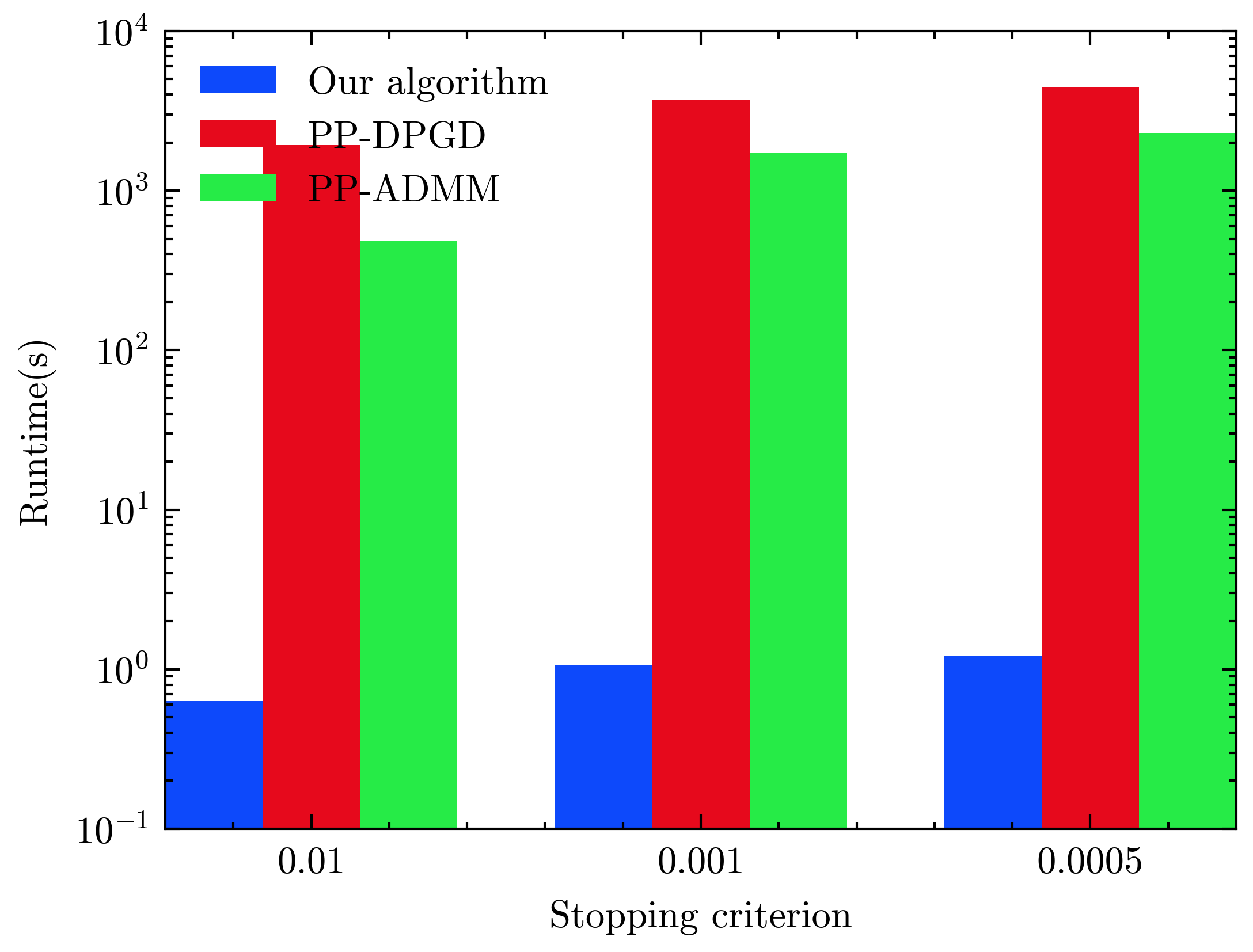}
  \caption{Comparison of the running time of Algorithm \ref{alg_private} with PP-DPGD \cite{zhang2019enabling} and PP-ADMM \cite{zhang2019admm} under stopping criteria 0.01, 0.001, 0.0005.}
  \label{fig_runtime}
\end{figure}

In the following, we compare the differences between the three algorithms in terms of the running time. Here, the running time is defined as the total computation time of all agents. To record the running time of the algorithms, we set the same stopping criteria and a maximum number of iterations for the three algorithms. When the relative residual of the algorithm is less than or equal to the given stopping criterion or the iteration count of the algorithm reaches the maximum number of iterations, we count the total number of iterations of the algorithm as well as its running time. Fig. \ref{fig_runtime} depicts the running time of the three algorithms under three different stopping criteria. It is clear that the running time of our proposed privacy-preserving algorithm is significantly less than that of the other two algorithms under all stopping criteria. This is not only due to the faster convergence rate of the proposed privacy-preserving algorithm resulting in fewer iterations (as shown in Fig. \ref{fig_convergence_crypesystem}) but also due to the faster encryption and decryption speed of AES.

To further demonstrate the advantages of the proposed privacy-preserving algorithm in terms of convergence and running time, we count the running time and the number of iterations of three algorithms under lower stopping criterion and higher dimensional data. When dimensions $s = 9$ and $d = 6$, the step-sizes of our algorithm and PP-DPGD are adjusted from $1.1\times 10^{-3}$, $1/(1\times 10^{4}+k)$ to $3\times 10^{-4}$, $1/(2\times 10^4 + k)$, and the parameters of PP-ADMM are changed from $\bar{b} = 0.5, \gamma = 0.1$ to $\bar{b} = 0.7, \gamma = 0.01$. Table \ref{table_runtime} displays the results. It can be seen that when the stopping criterion is set to 0.00001, the other two algorithms cannot converge within the maximum number of iterations while our algorithm can still converge with a small number of iterations. This is because our algorithm has a linear convergence rate. In addition, in all cases, the proposed privacy-preserving algorithm requires less running time than the other two algorithms. Even under the same number of iterations, the running time required by our algorithm is still less than that of the other two algorithms, as shown in Table \ref{table_runtime_undersameiterations}. This is because, compared to the Paillier cryptosystem used in the other two algorithms, the proposed algorithm employs AES, which has a faster encryption and decryption speed.

\begin{table*}[!t]
\centering
\caption{Running time/number of iterations of our algorithm, PP-DPGD \cite{zhang2019enabling}, and PP-ADMM \cite{zhang2019admm} under different stopping criteria and data dimensions.}
\begin{threeparttable}
\begin{tabular}{|c|cc|cc|cc|}
\hline
\multirow{2}{*}{\begin{tabular}[c]{@{}c@{}}Stopping\\ criterion\end{tabular}} & \multicolumn{2}{c|}{Our algorithm}           & \multicolumn{2}{c|}{PP-DPGD}                    & \multicolumn{2}{c|}{PP-ADMM}                     \\ \cline{2-7} 
                                                                              & \multicolumn{1}{c|}{(3,2)}      & (9,6)      & \multicolumn{1}{c|}{(3,2)}        & (9,6)       & \multicolumn{1}{c|}{(3,2)}       & (9,6)         \\ \hline
0.01                                                                          & \multicolumn{1}{c|}{0.63/42}   & 1.94/58   & \multicolumn{1}{c|}{1923.05/551}   & 2304.54/220 & \multicolumn{1}{c|}{486.82/115}  & 278.86/22     \\ \hline
0.001                                                                         & \multicolumn{1}{c|}{1.06/74}   & 3.37/98  & \multicolumn{1}{c|}{3730.28/1069} & 4104.41/392 & \multicolumn{1}{c|}{1731.26/408}  & 3173.84/250   \\ \hline
0.0005                                                                        & \multicolumn{1}{c|}{1.21/86}   & 4.05/118  & \multicolumn{1}{c|}{4469.49/1281} & 4753.63/454 & \multicolumn{1}{c|}{2293.36/540} & 6714.14/530   \\ \hline
0.0001                                                                        & \multicolumn{1}{c|}{1.63/116}  & 5.61/159 & \multicolumn{1}{c|}{9867.28/2827} & 6792.51/649 & \multicolumn{1}{c|}{**/10000 \tnote{a}}    & 29600.10/2331 \\ \hline
0.00001                                                                       & \multicolumn{1}{c|}{2.10/149} & 7.12/205 & \multicolumn{1}{c|}{**/10000 \tnote{a}}     & **/10000 \tnote{a}    & \multicolumn{1}{c|}{**/10000 \tnote{a}}    & **/10000 \tnote{a}     \\ \hline
\end{tabular}
\begin{tablenotes}
      \item[a] The algorithm reaches the maximum number of iterations, and its running time is very high (more than several hours).
\end{tablenotes}
\end{threeparttable}
\label{table_runtime}  
\end{table*}

\begin{table}[]
\centering
\caption{Running time of our algorithm, PP-DPGD \cite{zhang2019enabling}, and PP-ADMM \cite{zhang2019admm} under the same number of iterations, unit of seconds.}
\begin{tabular}{|c|cc|cc|cc|}
\hline
\multirow{2}{*}{Iterations} & \multicolumn{2}{c|}{Our algorithm}   & \multicolumn{2}{c|}{PP-DPGD}         & \multicolumn{2}{c|}{PP-ADMM}          \\ \cline{2-7} 
                            & \multicolumn{1}{c|}{(3,2)}  & (9,6)  & \multicolumn{1}{c|}{(3,2)}  & (9,6)  & \multicolumn{1}{c|}{(3,2)}  & (9,6)   \\ \hline
100                         & \multicolumn{1}{c|}{1.48}  & 3.46 & \multicolumn{1}{c|}{348.13} & 1045.89 & \multicolumn{1}{c|}{427.13} & 1272.43  \\ \hline
200                         & \multicolumn{1}{c|}{2.92} & 7.10 & \multicolumn{1}{c|}{697.64} & 2106.14 & \multicolumn{1}{c|}{857.30} & 2533.72 \\ \hline
\end{tabular}
\label{table_runtime_undersameiterations}  
\end{table}
\section{Conclusions}\label{section_conclusion}
In this paper, we propose an AES-based privacy-preserving algorithm for the distributed optimization problem. We show that the proposed algorithm can protect the gradients of agents against both the honest-but-curious adversary and the external eavesdropper, only requiring agents to have at least one legitimate neighbor at the initial iteration. Under the assumption that the objective function is strongly convex and Lipschitz smooth, we prove that the proposed algorithm has a linear convergence rate. Our algorithm can be applied to time-varying directed graphs by appropriately constructing the underlying weight matrices. Our algorithm also reduces the computational and communication overhead by using computationally efficient AES and reducing the number of interactions between agents.

The proposed algorithm can only preserve the gradients of agents. For future research, it is worthwhile to develop algorithms that preserve both the gradients and intermediate states with respect to the decision variable.
\appendices
\section{}\label{proof_lemmas}
\subsection{Proof of Lemma \ref{lemma_vr}}
\begin{proof}
It follows from Assumption \ref{smooth} that
	\begin{equation}
	\label{db_1}
	\begin{aligned}
	&\ \| \nabla \mathbf{f}(\mathbf{x}(k+1)) - \nabla \mathbf{f}(\mathbf{x}(k)) \|_{\text{F}} \\
	\le &\ \hat{L} \| \mathbf{x}(k+1) - \mathbf{x}(k) \|_{\text{F}} \\
	 = &\ \hat{L} \| (\mathbf{x}(k+1) - \mathbf{x}^*) - (\mathbf{x}(k) - \mathbf{x}^*) \|_{\text{F}} \\
	 \le &\ \hat{L} \| \mathbf{x}(k+1) - \mathbf{x}^* \|_{\text{F}} + \hat{L} \| \mathbf{x}(k) - \mathbf{x}^* \|_{\text{F}}.
	\end{aligned}
	\end{equation}
Based on the definitions of $\mathbf{v}(k)$ and $\mathbf{r}(k)$ and multiplying both sides of (\ref{db_1}) by $\theta^{-(k+1)}$, we have
	\begin{equation}
	\label{db_2}
	\theta^{-(k+1)} \| \mathbf{v}(k+1) \|_{\text{F}} \le \hat{L} \theta^{-(k+1)} \| \mathbf{r}(k+1) \|_{\text{F}} + \frac{\hat{L}}{\theta}\theta^{-k} \| \mathbf{r}(k) \|_{\text{F}}.
	\end{equation}
	
Finally, taking $\max_{k = 1, \dots, K-1}\{\cdot\}$ on both sides of (\ref{db_2}), we can obtain (\ref{db_0}).
\end{proof}

\subsection{Proof of Lemma \ref{lemma_uv}}
Before giving the proof of Lemma \ref{lemma_uv}, we present the following auxiliary lemma.
\begin{lemma}\label{contraction}
Let $B_{0}$ and $\varepsilon$ be given by (\ref{equation_varepsilon}). For any matrix $D$ with appropriate dimensions, if $C = \Phi_{B_0}(k) D,\ k = B_0, B_0 + 1, \dots$, we have $\| C \|_{\text{R}} \le \varepsilon \| D \|_{\text{R}}$.
\end{lemma}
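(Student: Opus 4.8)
The plan is to combine the row-stochasticity of the matrices $\Phi(k)$ with a uniform positive lower bound on their nonzero entries and then run a Dobrushin/scrambling contraction over long time windows. First I would record the structural facts about $\Phi(k)=W(k+1)^{-1}A(k)W(k)$. Since $A(k)$ is column-stochastic and $\mathbf{w}(1)=\mathbf{1}$, (\ref{update_compact_w}) gives $\mathbf{1}^{T}\mathbf{w}(k)=m$ for all $k\ge 1$, so $0<w_i(k)\le m$; combining the diagonal bound $a_{ii}(k)\ge c_0$ with a time-ordered reachability argument over windows of length $mB$ (using that $\mathcal{G}_B(k)$ is strongly connected for every $k$, hence has diameter at most $m-1$, so any walk is realized by activating one genuine edge in one length-$B$ sub-window and using self-loops elsewhere) yields $w_i(k)\ge (c_0)^{mB}$ for $k\ge 1$, i.e., $\|W(k)^{-1}\|_{\max}\le (c_0)^{-mB}$. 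It follows that $\Phi(k)\mathbf{1}=W(k+1)^{-1}A(k)\mathbf{w}(k)=W(k+1)^{-1}\mathbf{w}(k+1)=\mathbf{1}$, so $\Phi(k)$ and every product $\Phi_b(k)$ is row-stochastic; each nonzero entry $\Phi(k)_{ij}=\frac{w_j(k)}{w_i(k+1)}a_{ij}(k)$ is at least $\frac{(c_0)^{mB}}{m}\cdot c_0\ge (c_0)^{mB+2}=\sigma$; and the sparsity pattern of $\Phi(k)$ is that of $A(k)$ (the edges of $\mathcal{G}(k)$ together with all self-loops).

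Next I would reduce the statement to a contraction of the consensus error. Because $\Phi_{B_0}(k)\mathbf{1}=\mathbf{1}$, one has $R\Phi_{B_0}(k)=R\Phi_{B_0}(k)R$, hence $\check{C}=RC=R\Phi_{B_0}(k)RD=R\Phi_{B_0}(k)\check{D}$, and it suffices to control the gain of $R\Phi_{B_0}(k)R$ on $\mathbf{1}^{\perp}$. I would split the $B_0$-fold product into $\lfloor(B_0-1)/(mB)\rfloor$ consecutive blocks of length $mB$ and one shorter leftover block. By the same time-ordered argument, every length-$mB$ block $\Phi_{mB}(\cdot)$ is scrambling: all its entries are strictly positive and at least $\sigma^{mB}$, so its Dobrushin coefficient is at most $1-\sigma^{mB}$; equivalently, such a block contracts the span seminorm $\|D\|_{\mathrm{sp}}$ (the largest Euclidean distance between two rows of $D$) by the factor $1-\sigma^{mB}$, whereas the leftover block, being a product of row-stochastic matrices, together with the flanking copies of $R$, is non-expansive. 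Chaining the blocks gives the factor $(1-\sigma^{mB})^{(B_0-1)/(mB)}$, and converting among the relevant norms ($\|\cdot\|_{\mathrm{R}}=\|\check{\cdot}\|_{\mathrm{F}}$, the row-wise Euclidean norm, and $\|\cdot\|_{\mathrm{sp}}$), and — if one routes the estimate through the column-stochastic products $A_{B_0}(k)=W(k+1)\Phi_{B_0}(k)W(k-B_0+1)^{-1}$ — absorbing the bounds on $\|W(\cdot)\|_{\max}$ and $\|W(\cdot)^{-1}\|_{\max}$, introduces only fixed constants built from $m$ and $c_0$. Collecting these into $\epsilon=2m(1+\sigma^{-mB})/(1-\sigma^{mB})$ yields $\|\check{C}\|_{\mathrm{F}}\le\epsilon(1-\sigma^{mB})^{(B_0-1)/(mB)}\|\check{D}\|_{\mathrm{F}}=\varepsilon\|\check{D}\|_{\mathrm{F}}$, which is a genuine contraction because $B_0$ is chosen via (\ref{equation_varepsilon}) precisely so that $\varepsilon<1$.

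The hard part is the quantitative positivity estimate underlying both steps: that a product of $mB$ consecutive matrices $\Phi(k),\dots,\Phi(k-mB+1)$ has every entry strictly positive with the explicit lower bound $\sigma^{mB}$. This needs a careful composition of walks in the time-varying graph in the correct temporal order — realizing any walk of length at most $m-1$ in $\mathcal{G}_B$ by moving one hop within each of $m-1$ consecutive length-$B$ sub-windows and padding the remaining steps with self-loops — and the same device is also what produces the uniform lower bound $w_i(k)\ge(c_0)^{mB}$ used above. A secondary point that must be handled carefully is the bookkeeping of the norm-conversion constants, so that they collapse exactly into $\epsilon$ and the stated $\varepsilon<1$ is not spoiled by a spurious dimensional factor.
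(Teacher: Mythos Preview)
Your proposal is correct and follows essentially the same route as the paper: both establish that $\Phi(k)$ is row-stochastic with nonzero entries uniformly bounded below by $\sigma=(c_0)^{2+mB}$ (via the estimate $w_i(k)\ge(c_0)^{mB}$ obtained from a reachability argument over length-$mB$ windows), and then turn the positivity of length-$mB$ products into a geometric contraction of the consensus error with rate $(1-\sigma^{mB})^{(B_0-1)/(mB)}$. The only cosmetic difference is the packaging of the contraction step: the paper centers by $\phi(k,B_0)=\tfrac{1}{m}(\mathbf{1}^{T}\Phi_{B_0}(k))^{T}$, bounds $\|\Phi_{B_0}(k)-\mathbf{1}\phi(k,B_0)^{T}\|_{2}\le m\|\Phi_{B_0}(k)-\mathbf{1}\phi(k,B_0)^{T}\|_{\max}$, and invokes the ergodicity estimate of Lemma~4 in \cite{nedic2009distributed} (together with Lemma~4 in \cite{nedic2015distributed} for the $w_i$ bound), whereas you sketch the underlying Dobrushin/scrambling argument by hand.
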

\begin{proof}
We know that $\Phi(k) = W(k+1)^{-1}A(k)W(k)$ and $\Phi(k)$ is a row stochastic matrix with each entry being $\Phi_{ij}(k) = a_{ij}(k)w_j(k)/w_i(k+1)$. We further denote the stochastic vector $\phi(k, B_0)= (\mathbf{1}^T \Phi_{B_0}(k))^T/m$ with $k \ge B_0$, then
	\begin{equation}\label{CD_1}
	\begin{aligned}
	\| C \|_{\text{R}} &= \| (I - \frac{1}{m}\mathbf{1}\mathbf{1}^T) \Phi_{B_0}(k) D \|_{\text{F}} \\
	&= \| (\Phi_{B_0}(k) - \mathbf{1}( \phi(k, B_0) )^T)(I - \frac{1}{m}\mathbf{1}\mathbf{1}^T) D \|_{\text{F}} \\
	&\le \| \Phi_{B_0}(k) - \mathbf{1}( \phi(k, B_0) )^T \|_2 \| D \|_{\text{R}}.
	\end{aligned}
	\end{equation}
	 
Before analyzing the upper bound of $\| \Phi_{B_0}(k) - \mathbf{1}( \phi(k, B_0) )^T \|_2$, we first give the lower bound of $\Phi_{ij}(k)$. We know that $\Phi_{ij}(k)$ is composed of $a_{ij}(k)$, $w_j(k)$, and $1/w_i(k+1)$. According to the weight generation rule in Table \ref{table_weight_rule}, it is clear that $a_{ij}(k) \ge c_0$ if $(i, j) \in \mathcal{E}(k)$ and $k \ge 1$. We next show that $w_i(k) \ge (c_0)^{m B}$ when $k \ge 1$. Since $[A(k)]_{ii} \ge c_0$ for $k \ge 1$, we can get that for any $i$,
	\begin{equation}
	[A(k)\cdots A(1)]_{ii} \ge c_0[A(k-1)\cdots A(1)]_{ii},\ k\ge 2.
	\end{equation}
Recursively, it follows that $[A(k-1)\cdots A(1)]_{ii} \ge (c_0)^{m B}$ for any $i$ and $2 \le k \le m B + 1$. In addition, it is shown in Lemma 4 of \cite{nedic2015distributed} that for $k \ge m B + 1$ every entry of $A(k-1)\cdots A(1)$ is not less than $(c_0)^{m B}$. Thus, we have $[A(k-1)\cdots A(1)]_{ii} \ge (c_0)^{m B}$ for any $i$ and $k \ge 2$. Furthermore, it follows from (\ref{update_compact_w}) that for any $k \ge 2$,
	\begin{equation}
	\begin{aligned}
	[\mathbf{w}(k)]_i &= [A(k-1)\cdots A(1)\mathbf{w}(1)]_i \\
	&= [A(k-1)\cdots A(1)\mathbf{1}]_i \\
	&\ge [A(k-1)\cdots A(1)]_{ii} \\
	&\ge (c_0)^{m B}.
	\end{aligned}
	\end{equation}
Due to $w_i(1) = 1$ and $0 < c_0 < 1/m$, we have $w_i(k) \ge (c_0)^{mB}$ for all agents $i$ and $k \ge 1$. Combining with the fact that $1/w_i(k) \ge 1/m > c_0$ for $k \ge 1$, we can get that $\Phi_{ij}(k) = a_{ij}(k)w_j(k)/w_i(k+1) > (c_0)^{2+m B}$ for $k \ge 1$.

Then, we can obtain that for $k \ge B_0$,
	\begin{equation}
	\begin{aligned}
	\| \Phi_{B_0}(k) - \mathbf{1}(\phi(k, B_0))^T \|_2 &\le m\| \Phi_{B_0}(k) - \mathbf{1}(\phi(k, B_0))^T \|_{\max} \\
	&\le 2m\frac{1+\sigma^{- m B}}{1-\sigma^{m B}}(1 - \sigma^{m B})^{\frac{B_0 - 1}{mB}},
	\end{aligned}
	\end{equation}
where $\sigma = (c_0)^{2+m B}$ and the second inequality is based on Lemma 4 of \cite{nedic2009distributed}. Finally recalling (\ref{CD_1}), we can conclude Lemma \ref{contraction}.
\end{proof}

Now, we give the proof of Lemma \ref{lemma_uv}.

\begin{proof}
The update (\ref{update_2c}) establishes the relationship between $\mathbf{u}(k+1)$ and $\mathbf{v}(k+1)$. By using (\ref{update_2c}) recursively, we have for $k \ge B_0-1$,
	\begin{equation*}
	\begin{aligned}
	\mathbf{u}(k+1) = &\ \Phi_{B_0}(k) \mathbf{u}(k-B_0+1) \\
	 &+ \Phi_{B_0 - 1}(k)W(k-B_0+2)^{-1}\mathbf{v}(k-B_0+2)\\
	 &+ \cdots + \Phi(k) W(k)^{-1}\mathbf{v}(k) + W(k+1)^{-1}\mathbf{v}(k+1).
	\end{aligned}
	\end{equation*}
By using Lemma \ref{contraction}, it follows that for $k \ge B_0$,
	\begin{equation}\label{ud_1}
	\begin{aligned}
	&\ \| \check{\mathbf{u}}(k+1) \|_{\text{F}} \\
	 =&\ \| \mathbf{u}(k+1) \|_{\text{R}}\\
	\le &\ \| \Phi_{B_0}(k) \mathbf{u}(k-B_0+1) \|_{\text{R}} \\
	&+ \| \Phi_{B_0 - 1}(k)W(k-B_0+2)^{-1}\mathbf{v}(k-B_0+2) \|_{\text{R}} \\
	&+ \cdots + \| W(k+1)^{-1}\mathbf{v}(k+1) \|_{\text{R}} \\
	\le &\ \varepsilon \| \check{\mathbf{u}}(k-B_0+1) \|_{\text{F}} \\
	&+ \epsilon \sum_{i = 1}^{B_0} \| W(k-i+2)^{-1}\mathbf{v}(k-i+2) \|_{\text{F}} \\
	\le &\ \varepsilon \| \check{\mathbf{u}}(k-B_0+1) \|_{\text{F}} \\
	&+ \epsilon \| W^{-1} \|_{\max}^{1} \sum_{i = 1}^{B_0} \| \mathbf{v}(k-i+2) \|_{\text{F}},
	\end{aligned}
	\end{equation}
where $\| W^{-1} \|_{\max}^{1} = \sup_{k\ge 1} \| W(k)^{-1} \|_{\max} \le 1/(c_0)^{m B}$.

Then, multiplying both sides of (\ref{ud_1}) by $\theta^{-(k+1)}$ for $k = B_0, B_0+1, \dots, K-1$, we have
	\begin{equation}\label{ud_2}
	\begin{aligned}
	&\ \theta^{-(k+1)} \| \check{\mathbf{u}}(k+1) \|_{\text{F}}\\
	\le&\  \varepsilon \theta^{-B_0} \theta^{-(k-B_0+1)} \| \check{\mathbf{u}}(k-B_0+1) \|_{\text{F}} \\
	&+ \epsilon \| W^{-1} \|_{\max}^{1} \sum_{i = 1}^{B_0} \theta^{-(i-1)} \theta^{-(k-i+2)} \| \mathbf{v}(k-i+2) \|_{\text{F}}.
	\end{aligned}
	\end{equation}
In addition, the following inequality always holds for $k = 0, 1, \dots, B_0 - 1$,
	\begin{equation}\label{ud_3}
	\begin{aligned}
	\theta^{-(k+1)} \| \check{\mathbf{u}}(k+1) \|_{\text{F}} \le \theta^{-(k+1)} \| \check{\mathbf{u}}(k+1) \|_{\text{F}}.
	\end{aligned}
	\end{equation}
Furthermore, taking $\max_{k = B_0, B_0+1, \dots, K-1} \{ \cdot \}$ for (\ref{ud_2}) and $\max_{k = 0, 1, \dots, B_0-1} \{ \cdot \}$ for (\ref{ud_3}), and combining the resulting two inequalities, it obtains
	\begin{equation}
	\begin{aligned}
	\| \check{\mathbf{u}} \|_{\text{F}}^{\theta, K} \le &\ \frac{\varepsilon}{\theta^{B_0}} \| \check{\mathbf{u}} \|_{\text{F}}^{\theta, K} + \epsilon \| W^{-1} \|_{\max}^{1} \sum_{i=1}^{B_0} \theta^{-(i-1)} \| \mathbf{v} \|_{\text{F}}^{\theta, K} \\
	& + \sum_{i=1}^{B_0} \theta^{-i} \| \check{\mathbf{u}}(i) \|_{\text{F}}.
	\end{aligned}
	\end{equation}
	
Finally, algebraically transforming the above inequality, we can obtain (\ref{ud_0}).
\end{proof}

\subsection{Proof of Lemma \ref{lemma_xu}}
\begin{proof}
This proof is similar to the proof of Lemma \ref{lemma_uv}. By invoking the update (\ref{update_2b}) recursively, it follows that for $k \ge B_0-1$,
	\begin{equation*}
	\begin{aligned}
	\mathbf{x}(k+1) =&\ \Phi_{B_0}(k) \mathbf{x}(k-B_0+1) - \eta \Phi_{B_0}(k)\mathbf{u}(k-B_0+1) \\
	&- \eta \Phi_{B_0 - 1}(k) \mathbf{u}(k+2-B_0) - \cdots - \eta \Phi(k) \mathbf{u}(k).
	\end{aligned}
	\end{equation*}
By using Lemma \ref{contraction}, we have for $k \ge B_0$,
	\begin{equation}\label{xu_1}
	\begin{aligned}
	\| \check{\mathbf{x}}(k+1) \|_{\text{F}} =&\ \| \mathbf{x}(k+1) \|_{\text{R}} \\
	\le &\ \| \Phi_{B_0}(k) \mathbf{x}(k-B_0+1) \|_{\text{R}} + \eta \| \Phi(k) \mathbf{u}(k) \|_{\text{R}} \\
	&+ \cdots + \eta \| \Phi_{B_0}(k) \mathbf{u}(k-B_0+1) \|_{\text{R}} \\
	\le &\ \varepsilon \| \check{\mathbf{x}}(k-B_0+1) \|_{\text{F}} + \varepsilon \eta \| \check{\mathbf{u}}(k-B_0+1) \|_{\text{F}} \\
	&+ \epsilon \eta \sum_{i=1}^{B_0 - 1}\| \mathbf{u}(k+1-i) \|_{\text{F}}.
	\end{aligned}
	\end{equation}
	
Then, multiplying both sides of (\ref{xu_1}) by $\theta^{-(k+1)}$ for $k = B_0, B_0+1, \dots, K-1$, we have
	\begin{equation}\label{xu_2}
	\begin{aligned}
	&\ \theta^{-(k+1)} \| \check{\mathbf{x}}(k+1) \|_{\text{F}} \\
	 \le&\ \varepsilon \theta^{-B_0}\theta^{-(k-B_0+1)} \| \check{\mathbf{x}}(k-B_0+1) \|_{\text{F}} \\
	&+ \varepsilon \eta \theta^{-B_0}\theta^{-(k-B_0+1)} \| \check{\mathbf{u}}(k-B_0+1) \|_{\text{F}}\\
	&+ \epsilon \eta \sum_{i=1}^{B_0-1} \theta^{-i}\theta^{-(k+1-i)}\| \check{\mathbf{u}}(k+1-i) \|_{\text{F}}.
	\end{aligned}
	\end{equation}
Moreover, we have the following inequality holds for $k = 0, 1, \dots, B_0-1$,
	\begin{equation}\label{xu_3}
	\begin{aligned}
	\theta^{-(k+1)} \| \check{\mathbf{x}}(k+1) \|_{\text{F}} \le \theta^{-(k+1)} \| \check{\mathbf{x}}(k+1) \|_{\text{F}}.
	\end{aligned}
	\end{equation}
Furthermore, taking $\max_{k = B_0, B_0+1, \dots, K-1}\{ \cdot \}$ for (\ref{xu_2}) and $\max_{k = 0, 1, \dots, B_0-1}\{ \cdot \}$ for (\ref{xu_3}), and combining the resulting inequalities, we have
	\begin{equation}\label{xu_4}
	\begin{aligned}
	\| \check{\mathbf{x}} \|_{\text{F}}^{\theta, K} \le&\ \frac{\varepsilon}{\theta^{B_0}}\| \check{\mathbf{x}} \|_{\text{F}}^{\theta, K} + \big(\frac{\varepsilon \eta}{\theta^{B_0}} +\epsilon\eta \sum_{i = 1}^{B_0-1}\theta^{-i} \big) \| \check{\mathbf{u}} \|_{\text{F}}^{\theta, K} \\
	& + \sum_{i=1}^{B_0}\theta^{-i} \| \check{\mathbf{x}}(i) \|_{\text{F}}.
	\end{aligned}
	\end{equation}
	
Finally, based on the above inequality, we can obtain (\ref{xu_0}).
\end{proof}

\subsection{Proof of Lemma \ref{lemma_rx}}
Before the proof of Lemma \ref{lemma_rx}, we give an auxiliary lemma as follows.
\begin{lemma}\label{IGD}
With the same assumptions of Lemma \ref{lemma_rx}, for any $K = 1, 2, \dots$, we have
	\begin{equation}\label{yx_0}
	\begin{aligned}
	\| \bar{y} - x^* \|_{\text{F}}^{\theta, K} \le&\  \frac{1}{\theta \sqrt{m}}\bigg( \sqrt{ \frac{\hat{L}(1+\beta) + \alpha \beta \hat{\mu}}{\bar{\mu} \beta}}\bigg) \sum_{i = 1}^{m} \| \bar{y} - x_i \|_{\text{F}}^{\theta, K} \\
	&+ 2\| \bar{y}(1) - x^* \|_{2},
	\end{aligned}
	\end{equation}
where $\bar{y}(k) = \frac{1}{m}\sum_{i=1}^{m}y_i(k)$.
\end{lemma}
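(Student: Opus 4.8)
The plan is to observe that the column average $\bar y(k)$ runs an \emph{inexact} gradient descent on the global objective $f$, to prove a one-step estimate for it, and then to push that estimate through the $\theta$-weighted norm, following the pattern of the proofs of Lemmas~\ref{lemma_uv} and \ref{lemma_xu}. First, since $\mathbf{1}^{T}A(k)=\mathbf{1}^{T}$ for all $k\ge 0$, averaging (\ref{update_a}) over the agents gives $\bar y(k+1)=\bar y(k)-\eta\,\bar s(k)$ with $\bar s(k)=\frac1m\sum_{i}s_i(k)$, while averaging (\ref{update_d}), telescoping, and using $s_i(0)=\nabla f_i(x_i(0))$ yields $\bar s(k)=\frac1m\sum_{i}\nabla f_i(x_i(k))$ for every $k$. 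Writing $\bar s(k)=\nabla f(\bar y(k))+e(k)$ with $e(k)=\frac1m\sum_{i}\bigl(\nabla f_i(x_i(k))-\nabla f_i(\bar y(k))\bigr)$, one obtains $\bar y(k+1)-x^{*}=\bigl(\bar y(k)-x^{*}-\eta\nabla f(\bar y(k))\bigr)-\eta\,e(k)$, a gradient step on $f$ perturbed only by the disagreements $x_i(k)-\bar y(k)$.

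Next I would square this identity, expand $\|\bar y(k+1)-x^{*}\|_2^{2}$, and bound it from above using: $\nabla f(x^{*})=\mathbf{0}$ with $\bar\mu$-strong convexity and the co-coercivity of $\nabla f$ implied by $\bar L$-smoothness (this is where $\eta\le 1/((1+\beta)\bar L)$ enters, to make the exact-gradient part contract); Young's inequality with the parameter $\beta$ on the $\|\bar s(k)\|_2^{2}$ term and with the parameter $\alpha$ on the cross term $\langle\bar y(k)-x^{*},e(k)\rangle$; and, for the perturbation, $L_i$-smoothness together with the Cauchy--Schwarz inequality, $\|e(k)\|_2\le\frac{\hat L}{\sqrt m}\bigl(\sum_{i}\|x_i(k)-\bar y(k)\|_2^{2}\bigr)^{1/2}\le\frac{\hat L}{\sqrt m}\sum_{i}\|x_i(k)-\bar y(k)\|_2$. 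This produces, for $k\ge 1$, a recursion $\|\bar y(k+1)-x^{*}\|_2\le\rho\,\|\bar y(k)-x^{*}\|_2+\frac{c}{\sqrt m}\sum_{i}\|x_i(k)-\bar y(k)\|_2$ in which the hypothesis $\sqrt{1-\alpha\eta\bar\mu/(\alpha+1)}\le\theta$ is exactly what certifies $\rho\le\theta$, and $c$ is the constant that collapses (after the next step) to $\sqrt{(\hat L(1+\beta)+\alpha\beta\hat\mu)/(\bar\mu\beta)}$.

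Finally I would multiply this recursion by $\theta^{-(k+1)}$, note that $\theta^{-(k+1)}\sum_{i}\|x_i(k)-\bar y(k)\|_2=\theta^{-1}\sum_{i}\theta^{-k}\|x_i(k)-\bar y(k)\|_2\le\theta^{-1}\sum_{i}\|\bar y-x_i\|_{\text F}^{\theta,K}$ for $1\le k\le K$, and take $\max_{k}$. The resulting self-referential term $(\rho/\theta)\|\bar y-x^{*}\|_{\text F}^{\theta,K}$ is moved to the left-hand side --- legitimate because $\rho/\theta<1$ --- exactly as in the proofs of Lemmas~\ref{lemma_uv} and \ref{lemma_xu}, while the index $k=1$ of the maximum leaves the initial-condition term, bounded by $2\|\bar y(1)-x^{*}\|_2$. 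Rearranging and collecting constants yields (\ref{yx_0}).

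The only delicate point is the algebra of the one-step estimate: the two Young parameters must be chosen, and the two step-size restrictions combined, so that the contraction is certified to be $\le\theta$ by \emph{precisely} the stated condition while, at the same time, the step-size $\eta$ cancels out of the final coefficient of $\sum_{i}\|\bar y-x_i\|_{\text F}^{\theta,K}$ --- heuristically, the $\eta^{2}$ generated by $\|e(k)\|_2^{2}$ is compensated by the $1/(\theta-\rho)=\mathcal{O}(1/(\eta\bar\mu))$ that appears when the self-reference is absorbed, so that only the clean factor $\frac{1}{\theta\sqrt m}\sqrt{(\hat L(1+\beta)+\alpha\beta\hat\mu)/(\bar\mu\beta)}$ survives. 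Everything else is routine bookkeeping with the norm $\|\cdot\|_{\text F}^{\theta,K}$.
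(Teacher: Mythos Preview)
Your proposal is correct and follows essentially the same route as the paper: derive that $\bar y(k+1)=\bar y(k)-\frac{\eta}{m}\sum_i\nabla f_i(x_i(k))$ from column stochasticity of $A(k)$ and the tracking property $\bar s(k)=\frac{1}{m}\sum_i\nabla f_i(x_i(k))$, and then obtain the bound (\ref{yx_0}) for this inexact gradient descent. The only difference is that the paper does not carry out your one-step estimate and $\theta$-weighted absorption explicitly but instead cites Lemma~3.12 of \cite{nedic2017achieving} for that part; your sketch is precisely a proof outline of that cited result.
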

\begin{proof}
It follows from (\ref{update_d}) that
	\begin{equation}\label{bx_3}
	\mathbf{s}(k+1) = A(k)\mathbf{s}(k) + \nabla \mathbf{f}(\mathbf{x}(k+1)) - \nabla \mathbf{f}(\mathbf{x}(k)).
	\end{equation}
Since $A(k)$ satisfies $\mathbf{1}^T A(k) = \mathbf{1}^T$ for any $k \ge 0$ and $\mathbf{s}(0) = \nabla \mathbf{f}(\mathbf{x}(0))$, we have
	\begin{equation}\label{bx_4}
	\begin{aligned}
	&\ \frac{1}{m}\mathbf{s}(k+1)^T \mathbf{1} - \frac{1}{m}\nabla \mathbf{f}(\mathbf{x}(k+1))^T \mathbf{1}\\
	=&\ \frac{1}{m}\mathbf{s}(k)^T\mathbf{1} - \frac{1}{m}\nabla \mathbf{f}(\mathbf{x}(k))^T\mathbf{1}\\
	=&\ \cdots\\
	=&\ \frac{1}{m}\mathbf{s}(0)^T\mathbf{1} - \frac{1}{m}\nabla \mathbf{f}(\mathbf{x}(0))^T\mathbf{1}\\
	=&\ \mathbf{0}.
	\end{aligned}
	\end{equation}
Thus, we can obtain that $(1/m)\mathbf{s}(k)^T\mathbf{1} = (1/m)\nabla\mathbf{f}(\mathbf{x}(k))^T\mathbf{1}$.

Then, it follows from (\ref{update_a}) that
	\begin{equation}\label{bx_5}
	\begin{aligned}
	\bar{y}(k+1) & = \frac{1}{m}\mathbf{y}(k+1)^T\mathbf{1}\\
	& = \frac{1}{m}\big( \mathbf{y}(k) - \eta\mathbf{s}(k) \big)^T A(k)^T \mathbf{1} \\
	& = \bar{y}(k) - \frac{\eta}{m} \mathbf{s}(k)^T \mathbf{1} \\
	& = \bar{y}(k) - \frac{\eta}{m} \sum_{i=1}^{m} \nabla f_i(x_i(k)).
	\end{aligned}
	\end{equation}
	
Finally, according to Lemma 3.12 in \cite{nedic2017achieving}, we can get (\ref{yx_0}).
\end{proof}

We now give the proof of Lemma \ref{lemma_rx}.

\begin{proof}
It follows from the definition of $\mathbf{r}(k)$ that for any $k\ge1$,
	\begin{equation}\label{bx_1}
	\begin{aligned}
	\mathbf{r}(k) =&\ \mathbf{x}(k) - \mathbf{x}^* \\
	=&\ \mathbf{x}(k) - \mathbf{1}\bar{x}(k)^T + \mathbf{1}\bar{x}(k)^T - \mathbf{1}\bar{y}(k)^T\\
	&+ \mathbf{1}\bar{y}(k)^T - \mathbf{x}^* \\
	= &\ \check{\mathbf{x}}(k) + \frac{1}{m}\mathbf{1}( \mathbf{1} - \mathbf{w}(k) )^T \mathbf{x}(k) + \mathbf{1} ( \bar{y}(k) - x^* )^T,
	\end{aligned}
	\end{equation}
where the third equality holds due to the fact that (\ref{update_c}). Then, taking $\| \cdot \|_{\text{F}}^{\theta, K}$ for (\ref{bx_1}), we have
	\begin{equation}\label{bx_2}
	\begin{aligned}
	\| \mathbf{r} \|_{\text{F}}^{\theta, K} \le \| \check{\mathbf{x}} \|_{\text{F}}^{\theta, K} + \frac{1}{\sqrt{m}} \| ( \mathbf{1} - \mathbf{w} )^T \mathbf{x} \|_{\text{F}}^{\theta, K} + \sqrt{m} \| \bar{y} - x^* \|_{\text{F}}^{\theta, K}.
	\end{aligned}
	\end{equation}
	
It follows from Lemma \ref{IGD} that
	\begin{equation}\label{bx_6}
	\begin{aligned}
	\| \bar{y} - x^* \|_{\text{F}}^{\theta, K} \le&\  \frac{1}{\theta \sqrt{m}}\bigg( \sqrt{ \frac{\hat{L}(1+\beta) + \alpha \beta \hat{\mu}}{\bar{\mu} \beta}}\bigg) \sum_{i = 1}^{m} \| \bar{y} - x_i \|_{\text{F}}^{\theta, K} \\
	&+ 2\| \bar{y}(1) - x^* \|_{2}.
	\end{aligned}
	\end{equation}
Since,
	\begin{equation}\label{bx_7}
	\begin{aligned}
	\sum_{i = 1}^{m} \| \bar{y} - x_i \|_{\text{F}}^{\theta, K} & = \sum_{i = 1}^{m} \| \bar{y} - \bar{x} + \bar{x} - x_i \|_{\text{F}}^{\theta, K} \\
	& \le m \| \bar{y} - \bar{x} \|_{\text{F}}^{\theta, K} + \sum_{i = 1}^{m} \| \bar{x} - x_i \|_{\text{F}}^{\theta, K} \\
	& \le \| (\mathbf{1} - \mathbf{w})^T \mathbf{x} \|_{\text{F}}^{\theta, K} + \sqrt{m} \| \check{\mathbf{x}} \|_{\text{F}}^{\theta, K}.
	\end{aligned}
	\end{equation}
Based on the fact that $\mathbf{1}^T \mathbf{w}(k) = m$ for $k \ge 1$, it follows that for $k \ge 1$,
	\begin{equation}\label{bx_8}
	\begin{aligned}
	\| (\mathbf{1} - \mathbf{w}(k))^T \mathbf{x}(k) \|_{2} & = \| (\mathbf{1} - \mathbf{w}(k))^T ( I - \frac{1}{m}\mathbf{1}\mathbf{1}^T ) \mathbf{x}(k) \|_{2} \\
	& \le \sqrt{m^2 -m} \| \check{\mathbf{x}}(k) \|_{\text{F}} \\
	& \le m \| \check{\mathbf{x}}(k) \|_{\text{F}}.
	\end{aligned}
	\end{equation}
Combining (\ref{bx_6}), (\ref{bx_7}), and (\ref{bx_8}), we have
	\begin{equation}\label{bx_9}
	\begin{aligned}
	\| \bar{y} - x^* \|_{\text{F}}^{\theta, K} \le &\  \frac{1}{\theta \sqrt{m}} \sqrt{ \frac{\hat{L}(1+\beta) + \alpha \beta \hat{\mu}}{\bar{\mu} \beta}} \| (\mathbf{1} - \mathbf{w})^T \mathbf{x} \|_{\text{F}}^{\theta, K} \\
	& + \frac{1}{\theta} \bigg( \sqrt{ \frac{\hat{L}(1+\beta) + \alpha \beta \hat{\mu}}{\bar{\mu} \beta}} \bigg) \| \check{\mathbf{x}} \|_{\text{F}}^{\theta, K} \\
	& + 2\| \bar{y}(1) - x^* \|_{2} \\
	\le &\ \frac{1+\sqrt{m}}{\theta} \bigg( \sqrt{ \frac{\hat{L}(1+\beta) + \alpha \beta \hat{\mu}}{\bar{\mu} \beta}} \bigg) \| \check{\mathbf{x}} \|_{\text{F}}^{\theta, K} \\
	& + 2\| \bar{y}(1) - x^* \|_{2}.
	\end{aligned}
	\end{equation}

Finally, combining (\ref{bx_2}), (\ref{bx_8}), and (\ref{bx_9}), we can obtain (\ref{bx_0}).
\end{proof}

\subsection{Proof of inequality (\ref{equation_theta_B0})}\label{proof_theta_B0}
\begin{proof}
When $0<\theta<1$, $\frac{1-\theta^{B_0}}{1-\theta} \le B_0$ implies $1 - \theta^{B_{0}} \le (1 - \theta)B_{0}$. Denote the function $f(B_{0}) = (1-\theta)B_{0} - 1 + \theta^{B_{0}}$. Its derived function is $f'(B_{0}) = 1-\theta + \theta^{B_{0}}\ln\theta$. When $0<\theta<1$, $\ln\theta<0$ and $\theta^{B_{0}}$ is monotonically decreasing. So, $f'(B_{0})$ is monotonically increasing. Denote the function $g(\theta) = f'(1) = 1-\theta+\theta\ln\theta$. When $0<\theta<1$, $g'(\theta) = \ln\theta < 0$. The function $g(\theta)$ is monotonically decreasing and satisfies $g(\theta) > g(1) = 0$. Moreover, since $f'(B_{0})$ is monotonically increasing, $f'(B_{0})\ge f'(1) = g(\theta)>0$ when $B_0\ge1$. The function $f(B_{0})$ is monotonically increasing. Therefore, we have $f(B_{0})\ge f(1) = 0$ which in turn gives $1 - \theta^{B_{0}} \le (1 - \theta)B_{0}$.
\end{proof}

\section*{ACKNOWLEDGMENT}
The authors would like to thank the anonymous reviewers and associate editor for their constructive comments and insightful suggestions, especially for bringing us the attention on AES, according to which we propose a more efficient algorithm.

\section*{References}
\bibliographystyle{IEEEtran}
\normalem
\bibliography{IEEEabrv, mybib}

\end{document}